\tikzset{node distance=2cm, auto}
\newcommand{\C}{\mathbb{C}}
\newcommand{\B}{\mathbb{B}}
\newcommand{\RP}{\mathbb{RP}}
\newcommand{\CP}{\mathbb{CP}}
\newcommand{\HP}{\mathbb{HP}}
\newcommand{\OP}{\mathbb{OP}}
\newcommand{\R}{\mathbb{R}}
\renewcommand{\S}{\mathbb{S}}
\newcommand{\M}{\mathcal{M}}
\newcommand{\cN}{\mathcal{N}}
\newcommand{\greek}[1]{\mathrm{#1}}
\newcommand{\Beta}{\greek{B}}
\newcommand{\contained}{\subset}
\newcommand{\containedeq}{\subseteq}
\newcommand{\intersection}{\cap}
\newcommand{\suchthat}{\, : \,}
\newcommand{\st}{\suchthat}
\newcommand{\isomorphic}{\cong}
\DeclarePairedDelimiterX\norm[1]\lVert\rVert{\ifblank{#1}{\:\cdot\:}{#1}}
\DeclarePairedDelimiterX\abs[1]\lvert\rvert{\ifblank{#1}{\:\cdot\:}{#1}}
\DeclarePairedDelimiterX\set[1]{\{}{\}}{\ifblank{#1}{\: \:}{#1}}
\DeclarePairedDelimiterX\innerprod[2]\langle\rangle{\ifblank{#1#2}{\,\cdot,\cdot\,}{#1,#2}}
\DeclarePairedDelimiterX\floor[1]\lfloor\rfloor{\ifblank{#1}{\:\cdot\:}{#1}}
\DeclarePairedDelimiterXPP\expectation[2]{\ifblank{#1}{\mathbb{E}}{\mathbb{E}_{#1}}}\lbrack\rbrack{}{\ifblank{#2}{\:\cdot\:}{#2}}
\newcommand{\vol}{\operatorname{vol}}
\newcommand{\longmap}[5]{
	\begin{array}{cccc}
		#1 \colon & #2 & \longrightarrow & #3,\\[2pt]
		&#4 & \longmapsto & #5
	\end{array}
}
\newcommand{\colvec}[1]{
	\mathchoice{
		\begin{pmatrix}
			#1
		\end{pmatrix}
	}
	{
		\begin{psmallmatrix}
			#1
		\end{psmallmatrix}
	}
	{}{}
} 
\newcommand{\fiberbundle}[4]{#2\hookrightarrow#3\overset{#1}{\to}#4}
\newcommand{\composed}{\circ}
\newcommand{\NJac}{\operatorname{NJac}}
\newcommand{\Jac}{\operatorname{Jac}}
\newcommand{\diam}{\operatorname{diam}}
\newcommand{\unif}{\operatorname{unif}}
\newcommand{\erf}{\operatorname{erf}}
\newcommand{\from}{\colon}
\renewcommand{\exp}{\mathord{\operatorname{exp}}} 
\newacronym[plural=\textup{\textsc{cross}es},firstplural=compact rank one symmetric spaces \textup{(\textsc{cross}es)}]{cross}{\textup{\textsc{cross}}}{compact rank one symmetric space}
\newacronym{simo}{\textup{\textsc{simo}}}{single-input-multiple-output}
\theoremstyle{plain}
\newtheorem{theorem}{Theorem}[section]
\newtheorem{corollary}[theorem]{Corollary}
\newtheorem{lemma}[theorem]{Lemma}
\newtheorem{proposition}[theorem]{Proposition}
\theoremstyle{definition}
\newtheorem{definition}[theorem]{Definition}
\newtheorem{example}[theorem]{Example}
\theoremstyle{remark}
\renewcommand\fnum@algorithm{\fname@algorithm~\thealgorithm:}
\begin{document}

	\title[Measure-preserving mappings]{Measure-preserving mappings from the unit cube\\ to some symmetric spaces}
	
	\author{Carlos Beltrán}
	\address{Carlos Beltrán: Departamento de Matemáticas, Estadística y Computación, Universidad de Cantabria,  Avda. Los Castros, s/n, 39005 Santander, Spain}
	\email{beltranc@unican.es}
	
	\author{Damir Ferizovi\'c}
	\address{Damir Ferizovi\'c: Department of Mathematics, KU Leuven,  Celestijnenlaan 200b, Box 2400, 3001 Leuven, Belgium}
	\email{damir.ferizovic@kuleuven.be}
	
	\author{Pedro R. López-Gómez}
	\address{Pedro R. López-Gómez: Departamento de Matemáticas, Estadística y Computación, Universidad de Cantabria,  Avda. Los Castros, s/n, 39005 Santander, Spain}
	\email{lopezpr@unican.es}
	
	\date{\today{}}
	
    \thanks{The first and third authors have been supported by grant PID2020-113887GB-I00 funded by MCIN/AEI/ 10.13039/501100011033. The second author thankfully acknowledges support by the Methusalem grant of the Flemish Government. The third author has also been supported by grant PRE2021-097772 funded by MCIN/AEI/ 10.13039/501100011033 and by ``ESF Investing in your future''.}
	
	\subjclass[2020]{58C35, 52C35}
	
	\keywords{Measure-preserving mapping, symmetric spaces, spheres, projective spaces}

	\begin{abstract}
		We construct measure-preserving mappings from the $d$-dimensional unit cube to the $d$-dimensional unit ball and the compact rank one symmetric spaces, namely the $d$-dimensional sphere, the real, complex, and quaternionic projective spaces, and the Cayley plane. We also give a procedure to generate measure-preserving mappings from the $d$-dimensional unit cube to product spaces and fiber bundles under certain conditions.
	\end{abstract}
	\maketitle
	\hypersetup{linkcolor=black}
    \vspace{-0.35cm}
	\tableofcontents
	\hypersetup{linkcolor=Maroon}

\vspace{-0.35cm}
\section{Introduction and main results}

Given two measure spaces $(\Omega_1,\Sigma_1,\mu_1)$ and $(\Omega_2,\Sigma_2,\mu_2)$, a bijective mapping $\varphi\from\Omega_1\to\Omega_2$ is said to be \emph{measure preserving} if both $\varphi$ and $\varphi^{-1}$ are measurable mappings and moreover $\mu_2(A)=\mu_1(\varphi^{-1}(A))$ for every $A\in\Sigma_2$; or, equivalently, if $\mu_1(A)=\mu_2(\varphi(A))$ for every $A\in\Sigma_1$. In this work, we look for measure-preserving smooth diffeomorphisms between Riemannian manifolds. 

The problem of finding measure-preserving mappings from one manifold to another has applications in  cartography, computer graphics, medical imaging, signal processing, or, more generally, in any area that requires good discretizations of a certain space. Thus, when looking for uniform collections of points or uniform grids (that is, grids all of whose cells have the same volume) on a manifold $\M$, a frequent approach consists in generating collections or grids with that property on a simpler, easily discretizable space such as the unit cube, and then transporting them to $\M$ through a measure-preserving mapping. In this sense, most of the research has been carried out for two-dimensional and three-dimensional manifolds (see \cite{ShirleyChiu1997,HolhosRosca2014,HolhosRosca2016,HolhosRosca2019,Holhos2017,RoscaPlonka2011,RoscaMorawiecDeGraef2014} and references therein). In \cite{RoscaMorawiecDeGraef2014}, the authors also obtain a mapping from the $n$-dimensional sphere of radius $r$ in $\R^{n+1}$ to the $n$-dimensional ball of radius $R$ in $\R^n$ which generalizes the equal-area Lambert mapping.

Measure-preserving mappings are also relevant in the theory of partial differential equations on Lipschitz domains (see \cite{GriepentrogHoepnerKaiserRehberg2008}), in the generation of low-discrepancy points (see, for example, \cite{BrauchartDick2012,DeMarchiElefante2018,Alexa2022,FerizovicHofstadlerMastrianni2022, Ferizovic2022}), and, more recently, they have been used to generate projective constellations for noncoherent communications over \gls{simo} channels; see \cite{NgoDecurningeGuillaudYang2020}, where the authors construct a measure-preserving mapping from the unit square to the complex projective line $\CP^1$, or \cite{CuevasAlvarezVizosoBeltranSantamariaTucekPeters2022} for the higher dimensional case. However, to the best of our knowledge, there are no constructive procedures to generate measure-preserving mappings from the $d$-dimensional unit cube to the $d$-sphere and to the remaining projective spaces.

\subsection{Notation}

In this paper, $\lambda$ denotes the Lebesgue measure in $\R^d$, and $B^d(0,R)$ denotes the open ball of radius $R\in(0,\infty]$ in $\R^d$ (if $R=\infty$, this means just $\R^d$). When $R=1$, we denote it by $\B^d$. We will call $(0,1)^d$ the (open) unit cube. 

We denote the measure associated to the normal distribution $\mathcal N(0,c)$ in $\R^d$ by $\mu_c$, that is, 
$$
d\mu_c(x)=\frac{1}{(2\pi c)^{d/2}}e^{-\norm{x}^2/(2c)}\,d\lambda(x).
$$
It is well known that the mapping
\begin{equation}\label{eq:PhiRd}
	\longmap{\Phi_{\R^d}}{(0,1)^d}{\R^d}
	{\displaystyle\colvec{x_1\\\vdots\\x_d}}
	{\displaystyle\colvec{\sqrt{2}\erf^{-1}(2x_1-1)\\\vdots\\\sqrt{2}\erf^{-1}(2x_d-1)},}
\end{equation}
is measure preserving from $((0,1)^d,\lambda)$ to $(\R^d,\mu_{c=1})$, where $\erf^{-1}$ is the inverse of the error function $\erf\from \R\to (-1,1)$ given by
\begin{equation*}
    \erf(t)=\frac{2}{\sqrt{\pi}}\int_0^te^{-s^2}\,ds.
\end{equation*}
Given any continuous function $\omega\from(0,R)\to(0,\infty)$, we consider the associated measure in $B^d(0,R)$ given by the weight function $\omega(\norm{x})$ and denote it by $\mu_\omega$, that is,
$$
d\mu_{\omega}(x)=\omega(\norm{x})\,d\lambda(x).
$$
We will always assume that $\mu_\omega$ is a probability measure, i.e.,
\begin{equation}\label{eq:peso1}
1=\int_{x\in B^d(0,R)}d\mu_\omega(x)=\int_{x\in B^d(0,R)}\omega(\norm{x})\,d\lambda(x)=\frac{2\pi^{d/2}}{\Gamma(\frac{d}{2})}\int_0^R\omega(s)s^{d-1}\,ds.
\end{equation}
Finally, if we have a Riemannian manifold $\M$ (including the case that $\M$ is the unit cube with the standard structure, the usual $d$-sphere, or any open set of a compact Riemannian manifold), we denote by $\mathrm{unif}$ the uniform measure in $\M$ according to its volume form. For example, $(\B^d,\unif)$ is the unit ball endowed with the Lebesgue measure normalized to have volume $1$, which can be denoted in our previous notation by $\mu_{\omega=1/\vol(\B^d)}$.

\subsection{The compact rank one symmetric spaces}

The \glspl{cross} are the $n$-sphere $\S^n$ and the real, complex, quaternionic, and octonionic projective spaces $\RP^n, \CP^n$, $\HP^n$, and $\mathbb{OP}^2$. These spaces, which were classified by É. Cartan, are  examples of locally harmonic Blaschke manifolds;  in fact, Lichnerowicz's conjecture claims that the \glspl{cross} are the only Riemannian manifolds of this kind. They are also the only compact connected two-point homogeneous Riemannian manifolds. See \cite{Besse1978} for more information about these spaces. 

Let $\M$ be a \gls{cross} and let $d$, $D$, and $V$ be, respectively, its real dimension, its diameter (that is, the maximum Riemannian distance between two points in $\M$), and its volume. The exponential map based on the north pole
\begin{equation*}
	\longmap{\exp_{\M}}{B^d(0,D)}{\M}
	{v}
	{\exp_{(0,\dotsc,0,1)}(v),}
\end{equation*}
is a diffeomorphism onto $\M\setminus \mathcal{X}$, where $\mathcal{X}$ is a measure zero set (just a point in the case $\M=\S^n$ and a hyperplane for the projective spaces). Moreover, the Jacobian of $\exp_{\M}$ is known as the volume density and has the form $\Omega(\norm{v})$ for a certain function $\Omega$. As a consequence, we have the following lemma:

\begin{lemma}\label{lemma:exp_preserving}
Let $\M$ be a \gls{cross}. Then, the exponential map $\exp_{\M}$ is a measure-preserving mapping from  $(B^d(0,D),\mu_{\omega=\Omega/V})$ to $(\M,\unif)$.
\end{lemma}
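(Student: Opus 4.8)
The plan is to verify the equivalent form of the measure-preserving condition stated in the introduction, namely that $\unif(\exp_\M(A)) = \mu_{\omega=\Omega/V}(A)$ for every measurable $A \subseteq B^d(0,D)$, by a single application of the change of variables formula together with the fact, recalled just above, that the Jacobian of $\exp_\M$ equals the radial volume density $\Omega(\norm{v})$.

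First I would dispose of the measurability and of the exceptional set. Since $\exp_\M$ restricts to a diffeomorphism of $B^d(0,D)$ onto $\M \setminus \mathcal{X}$, both it and its inverse are smooth, hence measurable, on these domains; and because $\mathcal{X}$ has zero volume it is $\unif$-null, so removing it alters no measures and $\exp_\M$ may be treated as a measure-space isomorphism between $B^d(0,D)$ and a full-measure subset of $\M$.

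The core of the argument is the computation itself. Writing $d\,\unif = \tfrac{1}{V}\,d\vol_\M$ by definition, for any measurable $A \subseteq B^d(0,D)$ the change of variables formula applied to the diffeomorphism $\exp_\M$ gives
\begin{equation*}
    \unif(\exp_\M(A)) = \frac{1}{V}\int_{\exp_\M(A)} d\vol_\M = \frac{1}{V}\int_A \abs{\Jac \exp_\M(v)}\,d\lambda(v) = \int_A \frac{\Omega(\norm{v})}{V}\,d\lambda(v),
\end{equation*}
and the right-hand side is precisely $\mu_{\omega=\Omega/V}(A)$, which is the claim. As a consistency check that $\mu_{\omega=\Omega/V}$ is a bona fide probability measure in the sense of \eqref{eq:peso1}, taking $A = B^d(0,D)$ yields $\int_{B^d(0,D)} \Omega(\norm{v})\,d\lambda = \vol(\M\setminus\mathcal{X}) = V$, so the total mass equals $V/V = 1$.

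I do not expect a genuine obstacle here: once the identification of $\Jac \exp_\M$ with the radial function $\Omega$ is granted, the lemma is little more than a restatement of the change of variables formula. The only points deserving a word of care are that $\mathcal{X}$ is null, so that it can be safely ignored on the target side, and the bookkeeping of the normalizing constant $V$, which is exactly what makes both densities match.
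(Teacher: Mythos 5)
Your proposal is correct and follows exactly the same route as the paper: the paper's proof likewise notes that $\exp_{\M}$ and its inverse are measurable because $\exp_{\M}$ is a smooth diffeomorphism, and then invokes the change of variables theorem, with the identification $\Jac\exp_{\M}(v)=\Omega(\norm{v})$ doing all the work. Your version merely spells out the computation, the handling of the null set $\mathcal{X}$, and the normalization check that the paper leaves implicit.
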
 

\begin{proof}
    Since $\exp_\M$ is a smooth diffeomorphism, both $\exp_{\M}$ and its inverse are measurable mappings. Now, let $A\subseteq(0,1)^d$ be a measurable set. Applying the change of variables theorem to $\exp_{\M}$ we readily get the result.
\end{proof}

\Cref{table:dov} summarizes the dimension, the diameter, the volume, the exponential map, and the volume density of these classical spaces.

\setlength{\tabcolsep}{10pt}
\begin{table}[htbp]
\begin{center}
    \caption{The volume density in the \glspl{cross} is $\omega_p(q)=\Omega(r)$, where $r=d_R(p,q)$ is the Riemannian distance. In this table, we show $r^{d-1}\Omega(r)$, where $d=\dim_{\R}(\M)$, for the \glspl{cross}. We also include the diameter $D=\diam(\M)$, the volume $V=\vol(\M)$, and the exponential map $\exp_{\M}$. Table taken from \cite{BeltranDelaTorreLizarte2022}.}
    \label{table:dov}
	\begin{tabular}{cccccl}
		\toprule
		$\M$ & $d$ & $D$ & $V$ & $\exp_{\M}(v)$ & $r^{d-1}\Omega(r)$  \\
		\midrule
		$\mathbb{S}^n$ &$n$  &	$\pi$ & $\displaystyle\frac{2\pi^{(n+1)/2}}{\Gamma(\frac{n+1}{2})}$ & $\displaystyle\colvec{\frac{v}{\norm{v}}\sin\norm{v}\\ \cos\norm{v}}$ & $\sin^{n-1}r$\\  \midrule
		$\mathbb{R}\mathbb{P}^n$&$n$ &	$\pi/2$	& $\displaystyle\frac{\pi^{(n+1)/2}}{\Gamma(\frac{n+1}{2})}$  & $\displaystyle\colvec{\frac{v}{\norm{v}}\tan\norm{v}\\ 1}$& $\sin^{n-1}r$ \\ \midrule
		$\mathbb{C}\mathbb{P}^{n}$&$2n$ &	$\pi/2$ & $\displaystyle\frac{\pi^n}{n!}$ & $\displaystyle\colvec{\frac{v}{\norm{v}}\tan\norm{v}\\ 1}$ & ${{\sin^{2n-1}r\cos r}}$\\ \midrule
		$\mathbb{H}\mathbb{P}^{n}$ & $4n$ &	$\pi/2$ & $\displaystyle\frac{\pi^{2n}}{(2n+1)!}$ & $\displaystyle\colvec{\frac{v}{\norm{v}}\tan\norm{v}\\ 1}$&${{\sin^{4n-1}r\cos^3 r}}$\\ \midrule			$\mathbb{O}\mathbb{P}^{2}$ &$16$ &	$\pi/2$ & $\displaystyle\frac{\pi^8}{1320\,\Gamma(8)}$ &$\displaystyle\colvec{\frac{v}{\norm{v}}\tan\norm{v}\\ 1}$ & ${{\sin^{15}r\cos^7 r}}$\\ \bottomrule
	\end{tabular}
\end{center}
\end{table}

\subsection{Main results}
Let $\gamma$ denote the incomplete gamma function:
\[
\gamma(t,x)=\int_0^xs^{t-1}e^{-s}\,ds.
\]
Our first main result is the following proposition, which yields a measure-preserving mapping from the unit cube to the unit ball:

\begin{proposition}\label{prop:mainresult_ball}
	Let $\varphi_{\B^d}\from (\R^d,\mu_{c=1})\to (\B^d,\unif)$ be the mapping given by
	\begin{equation*}
		\varphi_{\B^d}(x)=\frac{x}{\norm{x}}\bigg(\frac{\gamma\big(\frac{d}{2},\frac{\norm{x}^2}{2}\big)}{\Gamma(\frac{d}{2})}\bigg)^{1/d}.
	\end{equation*}
	Then, the mapping $\Phi_{\B^d}=\varphi_{\B^d}\composed\Phi_{\R^d}\from ((0,1)^d,\unif)\to (\B^d,\unif)$ is measure preserving.
\end{proposition}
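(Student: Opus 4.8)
The plan is to reduce to $\varphi_{\B^d}$ and then run a change-of-variables computation. Since the composition of measure-preserving bijections is again measure preserving, and $\Phi_{\R^d}$ is already known to be measure preserving from $((0,1)^d,\lambda)$ to $(\R^d,\mu_{c=1})$, it suffices to prove that $\varphi_{\B^d}$ is measure preserving from $(\R^d,\mu_{c=1})$ to $(\B^d,\unif)$.

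First I would record the structure of $\varphi_{\B^d}$: it is a \emph{radial} map, fixing each direction $x/\norm{x}$ and rescaling the radius $r=\norm{x}$ to $g(r)$, where $g(r)=(\gamma(\tfrac d2,\tfrac{r^2}{2})/\Gamma(\tfrac d2))^{1/d}$. Because $x\mapsto\gamma(\tfrac d2,x)$ increases from $0$ to $\Gamma(\tfrac d2)$, the function $g$ is a smooth strictly increasing bijection from $(0,\infty)$ onto $(0,1)$; hence $\varphi_{\B^d}$ restricts to a diffeomorphism from $\R^d\setminus\{0\}$ onto $\B^d\setminus\{0\}$, the exceptional sets have measure zero, and both $\varphi_{\B^d}$ and its inverse are measurable.

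The main step is the change of variables. For a radial map $x\mapsto g(r)\,x/r$ with $r=\norm{x}$, the differential acts by $g'(r)$ in the radial direction and by the scalar $g(r)/r$ in each of the $d-1$ orthogonal directions, so $\abs{\det D\varphi_{\B^d}(x)}=g'(r)\,(g(r)/r)^{d-1}$. To simplify this I would use the identity $\frac{d}{dx}\gamma(t,x)=x^{t-1}e^{-x}$ together with the relation $g(r)^d=\gamma(\tfrac d2,\tfrac{r^2}2)/\Gamma(\tfrac d2)$, whose derivative supplies $d\,g(r)^{d-1}g'(r)$ in closed form; substituting back and using $\vol(\B^d)=\pi^{d/2}/\Gamma(\tfrac d2+1)$, I expect the clean identity
\[
\frac{1}{\vol(\B^d)}\,\abs{\det D\varphi_{\B^d}(x)}=\frac{1}{(2\pi)^{d/2}}\,e^{-\norm{x}^2/2},
\]
that is, the pullback of the normalized volume density on $\B^d$ equals the Gaussian density of $\mu_{c=1}$. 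By the change-of-variables theorem this gives $\unif(\varphi_{\B^d}(A))=\mu_{c=1}(A)$ for every measurable $A\subseteq\R^d$, which is the claim.

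The computation hides no real difficulty; the only things to get right are the factorized form of the Jacobian of a radial map and the bookkeeping of the Gamma-function constants. A conceptual way to see why those constants must conspire is that $g^d$ is precisely the cumulative distribution function of the radial part of $\mu_{c=1}$ (the substitution $u=s^2/2$ turns $\int_0^r s^{d-1}e^{-s^2/2}\,ds$ into $2^{d/2-1}\gamma(\tfrac d2,\tfrac{r^2}2)$), while $\rho\mapsto\rho^d$ is the radial CDF of $\unif$ on $\B^d$; thus $g=G^{-1}\composed F$ is the standard quantile coupling transporting one radial law to the other, and the rotational invariance of both measures handles the angular part. Either route yields the result.
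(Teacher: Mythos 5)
Your proposal is correct and is essentially the paper's own argument: the paper proves your Jacobian computation in greater generality (its Theorem 2.1, for an arbitrary radial density $\omega$ on $B^d(0,R)$, with the radial profile $\rho$ defined implicitly through the incomplete gamma function, and the Jacobian of the radial map factorized exactly as in your $g'(r)\,(g(r)/r)^{d-1}$), and then obtains $\varphi_{\B^d}$ as the special case $\omega\equiv 1/\vol(\B^d)$, where the implicit equation solves in closed form; your identity $\frac{1}{\vol(\B^d)}\abs{\det D\varphi_{\B^d}(x)}=(2\pi)^{-d/2}e^{-\norm{x}^2/2}$ and the constants all check out. The reduction to $\varphi_{\B^d}$ by composing with $\Phi_{\R^d}$ and the change-of-variables conclusion are likewise the paper's route, so this is the same proof, merely specialized directly to the ball rather than routed through the general theorem.
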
   

The next main result provides a procedure to generate measure-preserving mappings from the unit cube to each \gls{cross}.

\begin{theorem}\label{thm:mainresult_cross}
	Let $\M$ be a \gls{cross} and let $\varphi_\M\from (\R^d,\mu_{c=1})\to (B^d(0,D),\mu_{\omega=\Omega/V})$ be the mapping given by $\varphi_\M(x)=x\rho(\norm{x})/\norm{x}$, where $\rho=\rho(r)$ is the unique solution to 
	\begin{equation*}
		\int_{0}^{\rho}\omega(s)s^{d-1}\,ds=\frac{1}{2\pi^{d/2}}\gamma\bigg(\frac{d}{2},\frac{r^2}{2}\bigg).
	\end{equation*}
	Then, the mapping $\Phi_\M=\exp_\M\composed \varphi_\M\composed\Phi_{\R^d}\from ((0,1)^d,\unif)\to (\M,\unif)$ is measure preserving.
\end{theorem}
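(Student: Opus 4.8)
The plan is to realize $\Phi_\M$ as a composition of three measure-preserving maps and to invoke the elementary fact that a composition of measure-preserving bijections is again measure preserving. Two of the three factors are already available: $\Phi_{\R^d}$ is measure preserving from $((0,1)^d,\lambda)$ to $(\R^d,\mu_{c=1})$ by the discussion around \eqref{eq:PhiRd}, and $\exp_\M$ is measure preserving from $(B^d(0,D),\mu_{\omega=\Omega/V})$ to $(\M,\unif)$ by \Cref{lemma:exp_preserving}. Thus the whole theorem reduces to showing that the radial map $\varphi_\M(x)=x\rho(\norm{x})/\norm{x}$ is measure preserving from $(\R^d,\mu_{c=1})$ to $(B^d(0,D),\mu_{\omega=\Omega/V})$.

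First I would check that $\rho$ is well defined. The left-hand side $s\mapsto\int_0^s\omega(u)u^{d-1}\,du$ of its defining equation is continuous and strictly increasing (because $\omega>0$), and by the normalization \eqref{eq:peso1} it increases from $0$ at $s=0$ to $\Gamma(\frac d2)/(2\pi^{d/2})$ as $s\to D$. The right-hand side $r\mapsto\frac{1}{2\pi^{d/2}}\gamma(\frac d2,\frac{r^2}{2})$ is likewise continuous and strictly increasing, and since $\gamma(\frac d2,x)\to\Gamma(\frac d2)$ as $x\to\infty$ it sweeps out exactly the same interval $[0,\Gamma(\frac d2)/(2\pi^{d/2}))$ as $r$ runs over $[0,\infty)$. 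Hence $\rho$ exists, is unique, and is a strictly increasing homeomorphism $[0,\infty)\to[0,D)$, smooth on $(0,\infty)$ with $\rho'>0$ (by the implicit function theorem, both integrands being smooth and strictly positive). In particular $\varphi_\M$ is a radial bijection from $\R^d$ onto $B^d(0,D)$, smooth with smooth inverse off the origin.

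To verify the measure condition I would use that both $\mu_{c=1}$ and $\mu_\omega$ are rotationally invariant, and that in polar coordinates $\varphi_\M$ acts as the identity on the angular variable $\theta\in\S^{d-1}$ and as $\rho$ on the radial variable. Disintegrating both measures over $\S^{d-1}$ (where each factors as its radial marginal times the uniform measure on the sphere) reduces the claim to the single statement that $\rho$ transports the radial marginal of $\mu_{c=1}$ onto that of $\mu_\omega$, i.e. that the radial cumulative distributions agree: $\int_0^r\frac{1}{(2\pi)^{d/2}}e^{-t^2/2}t^{d-1}\,dt=\int_0^{\rho(r)}\omega(s)s^{d-1}\,ds$ for every $r\geq 0$ (up to the common angular factor $2\pi^{d/2}/\Gamma(\frac d2)$). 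The one computation needed is the substitution $u=t^2/2$, which gives
\[
\int_0^r\frac{1}{(2\pi)^{d/2}}e^{-t^2/2}t^{d-1}\,dt=\frac{2^{d/2-1}}{(2\pi)^{d/2}}\,\gamma\Big(\tfrac d2,\tfrac{r^2}{2}\Big)=\frac{1}{2\pi^{d/2}}\,\gamma\Big(\tfrac d2,\tfrac{r^2}{2}\Big),
\]
so the cumulative-distribution identity is exactly the defining equation for $\rho$, and the radial transport holds. (Equivalently, one may compute $|\det D\varphi_\M(x)|=\rho'(r)\,(\rho(r)/r)^{d-1}$ in polar coordinates and verify $\omega(\rho(r))\,|\det D\varphi_\M(x)|=\frac{1}{(2\pi)^{d/2}}e^{-r^2/2}$ by differentiating the defining equation in $r$; this is the infinitesimal form of the same identity.)

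The displayed computations are short, so the real care lies in the bookkeeping of measure-zero sets, which I regard as the main (if minor) obstacle. One must note that $\exp_\M$ is only a diffeomorphism onto $\M\setminus\mathcal X$ with $\mathcal X$ of measure zero (a point for $\S^n$, a hyperplane for the projective spaces), and that the polar decomposition used for $\varphi_\M$ degenerates at the origin. Since the measure-preserving property is insensitive to null sets and a composition of bijections defined off null sets is again such a bijection, these exceptional sets do not affect the argument, but they should be acknowledged. Combining the three factors then yields that $\Phi_\M=\exp_\M\composed\varphi_\M\composed\Phi_{\R^d}$ is measure preserving from $((0,1)^d,\unif)$ to $(\M,\unif)$, as claimed.
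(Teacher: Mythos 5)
Your proposal is correct, and at the top level it follows the paper exactly: $\Phi_\M$ is written as a composition of three measure-preserving maps, with $\Phi_{\R^d}$ handled by the discussion around \eqref{eq:PhiRd} and $\exp_\M$ by \cref{lemma:exp_preserving}, so that everything reduces to the radial map $\varphi_\M$. Where you genuinely differ is in how that core step is discharged. The paper isolates it as \cref{th:maintech} (stated for general variance $c$, radius $R$, and weight $\omega$) and proves it by a Jacobian computation: choosing an orthonormal frame at $x$ containing $x/\norm{x}$, it shows $\Jac\varphi(x)=\rho'(\norm{x})\,\rho(\norm{x})^{d-1}/\norm{x}^{d-1}$, rewrites this using the derivative of the defining equation \eqref{eq:rho}, and then applies the change of variables theorem to an arbitrary measurable set. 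You instead exploit rotational invariance: both $\mu_{c=1}$ and $\mu_\omega$ disintegrate in polar coordinates as a radial marginal times the uniform probability measure on $\S^{d-1}$, the map $\varphi_\M$ is the identity in the angular variable, and the whole claim collapses to the one-dimensional identity $\int_0^r(2\pi)^{-d/2}e^{-t^2/2}t^{d-1}\,dt=\int_0^{\rho(r)}\omega(s)s^{d-1}\,ds$, which after the substitution $u=t^2/2$ is precisely the equation defining $\rho$. The two routes are equivalent (as you yourself note, the Jacobian identity is the infinitesimal form of your cumulative-distribution identity), but they buy different things: the paper's computation yields the explicit Jacobian and is packaged once as a general theorem that is then reused verbatim for the ball, the Gaussian and stereographic examples, and every \gls{cross}; yours makes it conceptually transparent that the defining equation for $\rho$ is nothing but inverse-CDF (quantile) transport of the radial distributions, at the cost of invoking the product structure of the polar disintegration, which is itself a Fubini/coarea fact of essentially the same depth as the change of variables it replaces. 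Your explicit bookkeeping of null sets (the set $\mathcal{X}$ in \cref{lemma:exp_preserving}, the origin in the polar decomposition) is slightly more careful than the paper, which leaves these points implicit in its conventions; that care is appropriate, since the composite is an honest bijection only off such null sets.
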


The following conmutative diagram illustrates the construction described in \cref{thm:mainresult_cross}:
\begin{equation*}
		\begin{tikzcd}[ampersand replacement=\&, row sep=1cm, column sep=1.5cm]
			((0,1)^d,\unif) \arrow[swap]{d}{\Phi_{\M}} \arrow{r}{\Phi_{\R^d}}  \& (\R^d,\mu_{c=1}) \arrow{d}{\varphi_{\M}}  \\
			(\M,\unif)	  \&  (B^d(0,D),\mu_{\omega=\Omega/V}) \arrow{l}{\mathord{\exp_{\M}}}
		\end{tikzcd}
\end{equation*}
 
It follows straightforwardly from the definition of measure-preserving mapping that, given two Riemannian manifolds $\M_1$ and $\M_2$, and two measure-preserving mappings $\Phi_{\M_1}\from (0,1)^{\dim(\M_1)}\to \M_1$ and $\Phi_{\M_2}\from (0,1)^{\dim(\M_2)}\to \M_2$, the mapping 
\begin{equation*}
	\longmap{\Phi_{\M_1\times \M_2}}{((0,1)^{\dim(\M_1)+\dim(\M_2)},\unif)}{(\M_1\times \M_2,\unif)}{(x,y)}{(\Phi_{\M_1}(x),\Phi_{\M_2}(y)),}
\end{equation*} 
where $x\in (0,1)^{\dim(\M_1)}$ and $y\in(0,1)^{\dim(\M_2)}$, is also measure preserving. As a consequence, since by \cref{thm:mainresult_cross} we have measure-preserving mappings from the unit cube to any \gls{cross}, we also have a constructive procedure to generate measure-preserving mappings from the unit cube to any finite product of \glspl{cross}. In this work we generalize this property to the case of fiber bundles:

\begin{theorem}\label{thm:mainresult_bundles}
	Let $E$, $B$, and $F$ be Riemannian manifolds, where we assume that the measures in $E$, $B$, and $F$ are normalized to have unit volume, and let $\fiberbundle{\pi}{F}{E}{B}$ be a smooth fiber bundle such that $\NJac\pi(x)$ is constant for every $x\in E$. Let $\Phi_{B}\from ((0,1)^{\dim(B)},\unif)\to (B,\unif)$ and $\Phi_{F}\from ((0,1)^{\dim(F)},\unif)\to (F,\unif)$ be measure-preserving mappings. Let $\Psi_y\from (F,\unif)\to (\pi^{-1}(y),\unif)$  be a measure-preserving mapping for every $y\in B$ such that the mapping $\xi\from(B\times F,\unif)\to (E,\unif)$ given by $\xi(y,z)=\Psi_y(z)$ is measurable. Then, $\xi$ is measure preserving and hence the mapping
	\begin{equation*}
		\longmap{\Phi_E}{((0,1)^{\dim(E)},\unif)}{(E,\unif)}{(y,z)}{\Psi_{\Phi_B(y)}(\Phi_F(z)),}
	\end{equation*}
	where $y\in(0,1)^{\dim(B)}$ and $z\in(0,1)^{\dim(F)}$, is measure preserving.
\end{theorem}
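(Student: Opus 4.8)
The plan is to prove the statement in two stages: first establish that the gluing map $\xi\colon(B\times F,\unif)\to(E,\unif)$ is measure preserving, and then deduce the measure-preserving property of $\Phi_E$ essentially for free by exhibiting it as a composition. For the composition step, note that $\dim E=\dim B+\dim F$ and that, by definition, $\Phi_E=\xi\composed(\Phi_B\times\Phi_F)$, where $\Phi_B\times\Phi_F\colon((0,1)^{\dim E},\unif)\to(B\times F,\unif)$ is the product map, which is measure preserving by the product construction recalled just before the statement. Hence, once $\xi$ is shown to be measure preserving, $\Phi_E$ is a composition of two measure-preserving maps and the conclusion is immediate.

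The heart of the matter is thus the measure-preserving property of $\xi$, which I would obtain from the coarea formula. For a measurable set $A\subseteq E$, applying the coarea formula to the submersion $\pi$ disintegrates the Riemannian measure of $A$ over the base as
\[
\NJac\pi\cdot\vol(E)\,\unif_E(A)=\int_{B}\,\biggl(\int_{\pi^{-1}(y)\cap A}dV_{\pi^{-1}(y)}\biggr)\,dV_B(y),
\]
where $dV_{(\cdot)}$ denotes the unnormalized Riemannian volume on the indicated manifold and $\unif_E$ the unit-volume measure on $E$. Since $\NJac\pi$ is constant it factors out of the integral, and after using the unit-volume normalizations of $E$ and $B$ and rewriting each fiber integral in terms of the normalized fiber measure $\unif_{\pi^{-1}(y)}$, this identity reduces to
\[
\unif_E(A)=\int_{B}\unif_{\pi^{-1}(y)}\bigl(\pi^{-1}(y)\cap A\bigr)\,d\unif_B(y).
\]

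The crucial bookkeeping point—and the step I expect to require the most care—is that, under the constancy of $\NJac\pi$ together with the normalization to unit volume, all fibers share a common volume, equal to $\NJac\pi$, so that the conversion factor between $dV_{\pi^{-1}(y)}$ and $\unif_{\pi^{-1}(y)}$ is independent of $y$ and cancels exactly against the constant $\NJac\pi$, leaving the clean identity above. This is the delicate part: the same computation carried out with a nonconstant fiber volume would leave a $y$-dependent factor that no choice of the $\Psi_y$ could absorb, so the constancy hypothesis is genuinely doing the work here.

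Finally, I would bring in the fiberwise data. Since each $\Psi_y\colon(F,\unif)\to(\pi^{-1}(y),\unif)$ is measure preserving, one has $\unif_{\pi^{-1}(y)}(\pi^{-1}(y)\cap A)=\unif_F(\set{z\in F\st \Psi_y(z)\in A})$. Substituting this into the reduced identity and noting that $\set{(y,z)\st\xi(y,z)\in A}=\xi^{-1}(A)$, the assumed measurability of $\xi$ permits an application of Fubini's theorem to recognize the resulting iterated integral as $(\unif_B\times\unif_F)(\xi^{-1}(A))$. This gives $\unif_E(A)=(\unif_B\times\unif_F)(\xi^{-1}(A))$ for every measurable $A\subseteq E$, which is precisely the assertion that $\xi$ is measure preserving, and the proof is complete.
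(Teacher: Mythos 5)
Your overall route is the same as the paper's: disintegrate the volume of a measurable $A\subseteq E$ over the base with the smooth coarea formula, transfer each fiber to $F$ through $\Psi_y$, reassemble with Fubini on $B\times F$, and finally write $\Phi_E=\xi\composed(\Phi_B\times\Phi_F)$. The composition step and the final Fubini step are fine. The problem is exactly the step you yourself flag as delicate: the claim that constancy of $\NJac\pi$, together with the unit-volume normalization, forces all fibers to share a common volume equal to $\NJac\pi$. That implication is false. Take a warped product $E=B\times F$ with metric $g_B\oplus f(y)^2g_F$, $f$ nonconstant, and $\pi(y,z)=y$: this is a smooth fiber bundle, the horizontal space at $(y,z)$ is $T_yB\oplus\{0\}$ and $D\pi$ restricted to it is an isometry, so $\NJac\pi\equiv 1$; yet the fiber over $y$ has volume $f(y)^{\dim F}\vol(F)$, which varies with $y$, and $f$ can be chosen so that $E$, $B$, and $F$ all have total volume $1$. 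Worse, the same example shows that under your reading of the hypotheses --- each fiber carrying its own renormalized probability measure $\unif_{\pi^{-1}(y)}$ --- the statement itself fails: the maps $\Psi_y(z)=(y,z)$ are then measure preserving and $\xi$ is the identity, but $\unif_B\times\unif_F\neq\unif_E$, since $d\unif_E$ has density proportional to $f(y)^{\dim F}$ with respect to $d\unif_B\times d\unif_F$. So no argument can close this gap while keeping that interpretation.

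The repair --- and this is what the paper's proof actually does --- is to read the fiber measure on $\pi^{-1}(y)$ as the induced (unnormalized) Riemannian volume. Then the hypothesis that each $\Psi_y\from F\to\pi^{-1}(y)$ is measure preserving carries precisely the information you are missing: a measure-preserving bijection matches total masses, so $\vol(\pi^{-1}(y))=\vol(F)=1$ for every $y\in B$. In other words, the constancy of the fiber volume comes from the $\Psi_y$'s, not from the constancy of $\NJac\pi$ (and this is what excludes the warped-product example, where such $\Psi_y$ cannot all exist). The constancy of $\NJac\pi$ is then used together with the coarea formula applied to $\psi\equiv1$ to evaluate the constant: $\NJac\pi=\vol(F)\vol(B)/\vol(E)=1$. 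Once you know $\NJac\pi\equiv1$ and $\vol(\pi^{-1}(y))\equiv1$, no conversion factor between $dV_{\pi^{-1}(y)}$ and $\unif_{\pi^{-1}(y)}$ is needed at all, your chain of identities becomes correct, and the remainder of your argument --- fiberwise transfer through $\Psi_y$, Fubini on $B\times F$, and the composition $\Phi_E=\xi\composed(\Phi_B\times\Phi_F)$ --- coincides with the paper's proof.
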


\subsection{Structure of the paper}

In \cref{sec:RdtoBd_general}, we prove our main technical result, which yields a procedure to generate measure-preserving mappings from $(\R^d,\mu_c)$ to $(B^d(0,R),\mu_\omega)$, and we prove \cref{prop:mainresult_ball}. In \cref{sec:cubetocross}, we prove \cref{thm:mainresult_cross} and we construct measure-preserving mappings from the unit cube to each \gls{cross}. In \cref{sec:fiberbundles}, we prove \cref{thm:mainresult_bundles} and we show an alternative procedure to construct measure-preserving mappings from the unit cube to odd-dimensional spheres using the Hopf fibration. \Cref{appendix:coarea} is devoted to the smooth coarea formula, a technical tool. Finally, in \cref{appendix} we present some auxiliary computations.

\section{A technical result}\label{sec:RdtoBd_general}

Recall that $\omega\from(0,R)\to(0,\infty)$ is any continuous function satisfying \eqref{eq:peso1}.

\begin{theorem}\label{th:maintech}
Let $G(\rho)=\int_0^\rho\omega(s)s^{d-1}\,ds$, and let $\rho=\rho(r)$ be the unique solution to
\begin{equation}\label{eq:rho}
G(\rho)=\frac{1}{2\pi^{d/2}}\gamma\bigg(\frac{d}{2},\frac{r^2}{2c}\bigg).
\end{equation}
Then, the mapping $\varphi\from(\R^d,\mu_c)\to(B^d(0,R),\mu_\omega)$ given by $\varphi(x)=x\rho(\norm{x})/\norm{x}$ is measure preserving.
\end{theorem}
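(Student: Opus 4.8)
The plan is to exploit the rotational symmetry of both $\mu_c$ and $\mu_\omega$ to reduce the statement to a one-dimensional change of variables along the radial direction, where the defining relation \eqref{eq:rho} is exactly what makes the radial densities match.

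First I would check that $\rho$ is well defined and regular. Since $\omega>0$ is continuous, $G(\rho)=\int_0^\rho\omega(s)s^{d-1}\,ds$ is a $C^1$ strictly increasing function mapping $(0,R)$ onto $(0,G(R))$, and by \eqref{eq:peso1} one has $G(R)=\Gamma(\frac d2)/(2\pi^{d/2})$. The right-hand side of \eqref{eq:rho}, namely $r\mapsto \gamma(\frac d2,\frac{r^2}{2c})/(2\pi^{d/2})$, is smooth, strictly increasing, and maps $(0,\infty)$ onto $(0,\Gamma(\frac d2)/(2\pi^{d/2}))=(0,G(R))$ because $\gamma(\frac d2,\infty)=\Gamma(\frac d2)$. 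Hence $\rho$ is uniquely defined as $G^{-1}$ composed with this right-hand side, and by the inverse function theorem (using $G'>0$) it is a $C^1$ increasing diffeomorphism from $(0,\infty)$ onto $(0,R)$. Consequently $\varphi$, extended by $\varphi(0)=0$, is a bijection from $\R^d$ onto $B^d(0,R)$ that is a diffeomorphism away from the origin; since $\{0\}$ is null for both $\mu_c$ and $\mu_\omega$, it suffices to establish the measure-preserving property on the punctured spaces.

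Next I would pass to polar coordinates $\Psi(r,\theta)=r\theta$ on $(0,\infty)\times\S^{d-1}$, under which $\Psi^*\mu_c=(2\pi c)^{-d/2}e^{-r^2/(2c)}r^{d-1}\,dr\,d\sigma(\theta)$ and $\Psi^*\mu_\omega=\omega(s)s^{d-1}\,ds\,d\sigma(\theta)$, with $d\sigma$ the surface measure on $\S^{d-1}$. In these coordinates $\varphi$ becomes $(r,\theta)\mapsto(\rho(r),\theta)$, acting as the identity on the angular factor. By Tonelli's theorem the claim therefore reduces to the one-dimensional statement that the pushforward of $\nu_c\defined(2\pi c)^{-d/2}e^{-r^2/(2c)}r^{d-1}\,dr$ under $r\mapsto\rho(r)$ equals $\nu_\omega\defined\omega(s)s^{d-1}\,ds$ on $(0,R)$. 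The crux is then a single change of variables $s=\rho(r)$: differentiating \eqref{eq:rho} in $r$ and using $G'(\rho)=\omega(\rho)\rho^{d-1}$ together with $\frac{\partial}{\partial x}\gamma(\frac d2,x)=x^{d/2-1}e^{-x}$, I obtain
\[
\omega(\rho(r))\,\rho(r)^{d-1}\,\rho'(r)=\frac{1}{(2\pi c)^{d/2}}\,e^{-r^2/(2c)}\,r^{d-1},
\]
which says precisely that $\rho$ transports $\nu_c$ to $\nu_\omega$. Tensoring with the common factor $d\sigma(\theta)$ and undoing the polar change of coordinates then yields $\mu_c(\varphi^{-1}(A))=\mu_\omega(A)$ for every measurable $A\subseteq B^d(0,R)$.

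The only genuinely delicate points are the regularity of $\rho$ as $r\to 0$, which guarantees that $\rho'$ is continuous and the one-dimensional change of variables is legitimate, and the bookkeeping ensuring that the Jacobian factor $r^{d-1}$ of polar coordinates combines correctly with the Gaussian density. I expect the verification of the displayed differentiated identity to be the main computational step, with the measure-theoretic reduction via rotational symmetry being routine.
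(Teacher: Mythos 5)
Your proof is correct, and it pivots on exactly the same computation as the paper's: differentiating \eqref{eq:rho} to obtain
\begin{equation*}
\omega(\rho(r))\,\rho(r)^{d-1}\rho'(r)=\frac{r^{d-1}}{(2\pi c)^{d/2}}\,e^{-r^2/(2c)},
\end{equation*}
which is the paper's equation \eqref{eq:aux1}. Where you genuinely diverge is in the surrounding reduction. The paper stays in Cartesian coordinates: it computes the full $d$-dimensional Jacobian of $\varphi$ by choosing an orthonormal basis at $x$ with first vector $x/\norm{x}$, getting $\Jac\varphi(x)=f(\norm{x})^{d-1}\lp f(\norm{x})+\norm{x}f'(\norm{x})\rp$ with $f(r)=\rho(r)/r$, and then applies the $d$-dimensional change of variables theorem to an arbitrary measurable set. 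You instead factor both measures through polar coordinates, observe that $\varphi$ reads $(r,\theta)\mapsto(\rho(r),\theta)$ there, and reduce via Tonelli to a one-dimensional transport statement settled by a one-dimensional change of variables. Each route buys something: yours avoids the multivariate Jacobian computation (which the paper compresses into ``a straightforward computation shows\dots'') and is more careful about the well-definedness and $C^1$ regularity of $\rho$ and about the null set $\{0\}$, which the paper glosses over; the paper's is shorter and self-contained within the classical change of variables theorem, with no polar factorization to set up. One further simplification available in your framework: the differentiation step can be skipped entirely, since \eqref{eq:rho} already states that the radial distribution functions match --- its right-hand side equals $\int_0^r(2\pi c)^{-d/2}t^{d-1}e^{-t^2/(2c)}\,dt$ by the substitution $u=t^2/(2c)$, so \eqref{eq:rho} reads $\nu_\omega\lp(0,\rho(r))\rp=\nu_c\lp(0,r)\rp$ in your notation --- and a strictly increasing map that matches distribution functions automatically pushes one measure forward to the other.
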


\begin{proof}
First, note that $\omega(\rho)>0$ implies that $G$ is an increasing function with $G(0)=0$. Moreover, \cref{eq:peso1} implies that $G(R)=\Gamma(d/2)/2\pi^{d/2}$, which means that $\rho$ is a well-defined bijection with $\rho(r\to\infty)\to R$. The inverse of $\varphi$ is easily computed: $\varphi^{-1}(y)=y\rho^{-1}(\norm{y})/\norm{y}$.

Computing the derivative with respect to $r$ at both sides of \eqref{eq:rho}, we get
\begin{equation}\label{eq:aux1}
    \omega(\rho)\rho^{d-1}\rho'(r)=\frac{r^{d-1}}{(2\pi c)^{d/2}}e^{-r^2/(2c)}.
\end{equation}
Now, let $f(r)=\rho(r)/r$ and compute the Jacobian of $\varphi(x)=xf(\norm{x})$ by choosing an orthonormal basis $v_1^x,\ldots,v_d^x$ at $x\in\R^d$, with $v_1^x=x/\norm{x}$. A straightforward computation shows that $D\varphi(x)$ preserves the orthogonality of that basis and yields
\begin{align*}
\Jac\varphi(x)=&f(\norm{x})^{d-1}(f(\norm{x})+\norm{x}f'(\norm{x}))\\
=&\frac{\rho(\norm{x})^{d-1}}{\norm{x}^{d-1}}\rho'(\norm{x})\\
\stackrel{\eqref{eq:aux1}}{=}&\frac{1}{(2\pi c)^{d/2}\omega(\rho(\norm{x}))}e^{-\norm{x}^2/(2c)}.
\end{align*}
Then, given any measurable set $\mathcal A\subseteq \R^d$, we can check that the measure of $\mathcal A$ in $(\R^d,\mu_c)$ equals that of $\varphi(\mathcal A)$ in $(B^d(0,R),\mu_\omega)$ using the change of variables theorem: if we denote by $\chi_{\mathcal A}$ the characteristic function of $\mathcal A$, then
\begin{align*}
   \mu_c(\mathcal A)=& \int_{x\in\R^d}\chi_{\mathcal A}(x)\,d\mu_c(x)\\=&\int_{x\in\R^d}\chi_{\mathcal A}(x)\frac{1}{(2\pi c)^{d/2}}e^{-\norm{x}^2/(2c)}\,d\lambda(x)\\
    =&\int_{x\in\R^d}\chi_{\mathcal A}(x)\omega(\rho(\norm{x}))\Jac\varphi(x)\,d\lambda(x)\\
    =&\int_{y\in B^d(0,R)}\chi_{\mathcal A}(\varphi^{-1}(y))\omega(\rho(\norm{\varphi^{-1}(y)}))\,d\lambda(y)\\
    =&\int_{y\in B^d(0,R)}\chi_{\varphi(\mathcal A)}(y)\omega(\norm{y})\,d\lambda(y)
    \\
    =&\int_{y\in B^d(0,R)}\chi_{\varphi(\mathcal A)}(y)\,d\mu_\omega(y)\\=&\mu_\omega(\varphi(\mathcal A)),
\end{align*}
which proves the theorem.
\end{proof}

\begin{example}[A measure-preserving mapping from $(\R^d,\mu_c)$ to $(\R^d,\mu_b)$]\label{example:RdtoRd_normals}
	In this case, we have $R=\infty$ and 
	\begin{equation*}
		\omega(\rho)=\frac{e^{-\rho^2/(2b)}}{(2\pi b)^{d/2}}.
	\end{equation*}
	Following \cref{th:maintech}, let
	\begin{equation*}
		G(\rho)=\int_0^\rho \omega(s)s^{d-1}\,ds=\frac{1}{(2\pi b)^{d/2}} \int_0^\rho s^{d-1}e^{-s^2/(2b)}\,ds=\frac{1}{2\pi^{d/2}}\gamma\bigg(\frac{d}{2},\frac{\rho^2}{2b}\bigg).
	\end{equation*}
	We have to obtain $\rho$ from
	\begin{equation*}
		\frac{1}{2\pi^{d/2}}\gamma\bigg(\frac{d}{2},\frac{\rho^2}{2b}\bigg)=\frac{1}{2\pi^{d/2}}\gamma\bigg(\frac{d}{2},\frac{r^2}{2c}\bigg),
	\end{equation*}
	which is obviously solved by $\rho(r)=r\sqrt{b/c}$. 
	Thus, we conclude that
	\begin{equation*}
		\varphi(x)=x\sqrt{\frac{b}{c}}
	\end{equation*}
	defines a measure-preserving mapping from $(\R^d,\mu_c)$ to $(\R^d,\mu_b)$.
\end{example}

\begin{example}[A measure-preserving mapping from $(\R^d,\mu_{c=1}))$ to $(\R^d,\mu_{\text{stereo}})$]\label{example:RdtoRd_mu}
	Let $\mu_{\text{stereo}}$ be the measure that makes the stereographic projection a measure-preserving mapping, that is,
	\begin{equation*}
		d\mu_{\text{stereo}}(x)=\frac{1}{\vol(\S^d)}\frac{2^d}{(1+\norm{x}^2)^d}\,d\lambda(x).
	\end{equation*}
	In this case, we have $c=1$, $R=\infty$, and
	\begin{equation*}
		\omega(\rho)=\frac{1}{\vol(\S^d)}\frac{2^d}{(1+\rho^2)^d}=\frac{\Gamma(\frac{d+1}{2})}{2\pi^{(d+1)/2}}\frac{2^d}{(1+\rho^2)^d}.
	\end{equation*}
	We compute
	\begin{equation*}
		G(\rho)=\int_0^\rho \omega(s)s^{d-1}\,ds=\frac{2^{d-1}\Gamma(\frac{d+1}{2})}{\pi^{(d+1)/2}}\int_0^\rho \frac{s^{d-1}}{(1+s^2)^d}\,ds.
	\end{equation*}
	If $d=2$, 
	\begin{equation*}
		G(\rho)=\frac{2\Gamma(\frac{3}{2})}{\pi^{3/2}}\int_0^\rho \frac{s}{(1+s^2)^2}\,ds=\frac{\rho^2}{2\pi(1+\rho^2)}.
	\end{equation*}
	Therefore, we have to obtain $\rho$ from
	\begin{equation*}
		\frac{\rho^2}{2\pi(1+\rho^2)}=\frac{1}{2\pi}\gamma\bigg( 1,\frac{r^2}{2}\bigg),
	\end{equation*}
	that is,
	\begin{equation*}
		\rho^2=\frac{1}{e^{-r^2/2}}-1=e^{r^2/2}-1.
	\end{equation*}
	Hence, we conclude that
	\begin{equation*}
		\varphi(x)=\frac{x}{\norm{x}}\sqrt{e^{\norm{x}^2/2}-1}
	\end{equation*}
	defines a measure-preserving mapping from $(\R^2,\mu_{c=1})$ to $(\R^2,\mu_{\text{stereo}})$.
\end{example}

\begin{example}[A measure-preserving mapping from $(\R^d,\mu_{c=1})$ to $(\B^d,\unif)$]\label{example:Rdtoball}
	In this case, we have $c=1$, $R=1$, and
	\begin{equation*}
		\omega(\rho)=\frac{1}{\vol(\B^d)}=\frac{\Gamma(\frac{d}{2}+1)}{\pi^{d/2}}=\frac{d\,\Gamma(\frac{d}{2})}{2\pi^{d/2}}.
	\end{equation*}
	We easily compute
	\begin{equation*}
		G(\rho)=\int_0^\rho \omega(s)s^{d-1}\,ds=\frac{\Gamma(\frac{d}{2})}{2\pi^{d/2}}\rho^d,
	\end{equation*}
and we have to obtain $\rho$ from
	\begin{equation*}
		\frac{\Gamma(\frac{d}{2})}{2\pi^{d/2}}\rho^d=\frac{1}{2\pi^{d/2}}\gamma\bigg(\frac{d}{2},\frac{r^2}{2}\bigg),
	\end{equation*}
concluding that, following the notation of \cref{prop:mainresult_ball},
	\begin{equation*}
		\varphi_{\B^d}(x)=\frac{x}{\norm{x}}\Bigg(\frac{\gamma\big(\frac{d}{2},\frac{\norm{x}^2}{2}\big)}{\Gamma(\frac{d}{2})} \Bigg)^{1/d}
	\end{equation*}
	defines a measure-preserving mapping from $(\R^d,\mu_{c=1})$ to $(\B^d,\unif)$.
\end{example}

\begin{proof}[Proof of \cref{prop:mainresult_ball}]
    Immediate from \cref{example:Rdtoball} and the fact that $\Phi_{\R^d}$ is measure preserving.
\end{proof}

\section{Measure-preserving mappings from the unit cube to the compact rank one symmetric spaces}\label{sec:cubetocross}

After \cref{th:maintech}, the proof of \cref{thm:mainresult_cross} is now straightforward:

\begin{proof}[Proof of \cref{thm:mainresult_cross}]
    Immediate from \cref{th:maintech}, \cref{lemma:exp_preserving}, and the fact that $\Phi_{\R^d}$ is measure preserving.
\end{proof}

We can now generate measure-preserving mappings from the unit cube to all the \glspl{cross} following \cref{th:maintech}: it suffices to consider $\exp_{\M}\composed\varphi_\M\composed \Phi_{\R^d}$, where, according to \cref{th:maintech}, the mapping $\varphi_\M\from(\R^d,\mu_{c=1})\to(B^d(0,D),\mu_{\omega=\Omega/V})$ can be computed, to some extent, explicitly. We do the computations for the different choices of $\M$ in the next few subsections. Recall that, for each \gls{cross} $\M$, we denote its real dimension by $d$, its diameter by $D$, and its volume by $V$ (see \cref{table:dov}).

\subsection{The unit sphere $\S^n$}
In this case, we have $d=n$, $D=\pi$, and
\begin{equation*}
    \omega(r)=\frac{\Omega(r)}{V}=\frac{\Gamma(\frac{n+1}{2})}{2\pi^{(n+1)/2}}\frac{\sin^{n-1}r}{r^{n-1}}.
\end{equation*}

\begin{corollary}\label{cor:sphere}
The mapping $\varphi_{\S^n}\from(\R^n,\mu_{c=1})\to(B^n(0,\pi),\mu_{\omega=\Omega/V})$ given by $\varphi_{\S^n}(x)=x\rho(\norm{x})/\norm{x}$ is measure preserving if $\rho=\rho(r)$ satisfies
\begin{equation*}
\int_0^\rho\sin^{n-1}r\,dr=\frac{\sqrt{\pi}}{\Gamma(\frac{n+1}{2})}\gamma\bigg(\frac{n}{2},\frac{r^2}{2}\bigg).
\end{equation*}
As a consequence, the mapping $\Phi_{\S^n}=\exp_{\S^n}\composed\varphi_{\S^n}\composed\Phi_{\R^n}\from((0,1)^n,\unif)\to(\S^n,\unif)$ is measure preserving. For $n=1$ we have
\begin{equation*}
    \rho(r)=\sqrt{\pi}\gamma\bigg(\frac{1}{2},\frac{r^2}{2}\bigg)=\pi\erf\bigg(\frac{r}{\sqrt{2}}\bigg),
\end{equation*}
and so 
\begin{equation*}
    \Phi_{\S^1}(x)=(-\sin{2\pi x},-\cos{2\pi x})\isomorphic-ie^{-i2\pi x}.
\end{equation*}
For $n=2$ we can compute $\rho(r)$ explicitly:
\begin{equation*}
    \rho(r)=2\arccos e^{-r^2/4},
\end{equation*}
and hence
\begin{equation*}
    \Phi_{\S^2}(x)=\bigg(\frac{\Phi_{\R^2}(x)}{\norm{\Phi_{\R^2}(x)}}2e^{-\norm{\Phi_{\R^2}(x)}^2/4}\sqrt{1-e^{-\norm{\Phi_{\R^2}(x)}^2/2}},2e^{-\norm{\Phi_{\R^2}(x)}^2/2}-1\bigg).
\end{equation*}
\end{corollary}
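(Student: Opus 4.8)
The plan is to treat the three assertions in turn, each following from machinery already in place. For the first (and main) claim, I would specialize \cref{th:maintech} (equivalently \cref{thm:mainresult_cross}) to the sphere. With $d=n$, $c=1$, $R=D=\pi$, and $\omega(r)=\Omega(r)/V=\frac{\Gamma(\frac{n+1}{2})}{2\pi^{(n+1)/2}}\sin^{n-1}r/r^{n-1}$, the quantity $G(\rho)=\int_0^\rho\omega(s)s^{n-1}\,ds$ has its $s^{n-1}$ factors cancel, leaving $G(\rho)=\frac{\Gamma(\frac{n+1}{2})}{2\pi^{(n+1)/2}}\int_0^\rho\sin^{n-1}s\,ds$. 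Setting $G(\rho)=\frac{1}{2\pi^{n/2}}\gamma(\frac{n}{2},\frac{r^2}{2})$ as in \eqref{eq:rho} and clearing constants (using $\pi^{(n+1)/2}/\pi^{n/2}=\sqrt{\pi}$) reproduces exactly the defining relation for $\rho$ stated in the corollary. Measure-preservation of $\varphi_{\S^n}$ is then immediate from \cref{th:maintech}, and that of $\Phi_{\S^n}=\exp_{\S^n}\composed\varphi_{\S^n}\composed\Phi_{\R^n}$ follows by composing three measure-preserving maps, namely $\Phi_{\R^n}$ (from \eqref{eq:PhiRd}), $\varphi_{\S^n}$, and $\exp_{\S^n}$ (\cref{lemma:exp_preserving}).

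For $n=1$, the defining relation reads $\rho=\int_0^\rho ds=\sqrt{\pi}\,\gamma(\frac{1}{2},\frac{r^2}{2})$ since $\Gamma(1)=1$. I would then invoke the elementary identity $\gamma(\frac{1}{2},x)=\sqrt{\pi}\,\erf(\sqrt{x})$ (a substitution $s=u^2$ in the integral defining $\gamma$) to obtain $\rho(r)=\pi\,\erf(r/\sqrt{2})$. Composing with $\Phi_{\R^1}(x)=\sqrt{2}\,\erf^{-1}(2x-1)$ and using $\erf\composed\erf^{-1}=\mathrm{id}$ gives $\varphi_{\S^1}(\Phi_{\R^1}(x))=\pi(2x-1)$, where the odd symmetry of $\erf$ absorbs the direction factor $v/\norm{v}=\operatorname{sign}(v)$. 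Finally, reading $\exp_{\S^1}$ off \cref{table:dov} as $v\mapsto(\sin v,\cos v)$ and applying the shift identities $\sin(\theta-\pi)=-\sin\theta$ and $\cos(\theta-\pi)=-\cos\theta$ yields $\Phi_{\S^1}(x)=(-\sin2\pi x,-\cos2\pi x)$, which is $-ie^{-i2\pi x}$ under the identification $\R^2\isomorphic\C$.

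For $n=2$, I would evaluate both sides of the defining relation in closed form: $\int_0^\rho\sin s\,ds=1-\cos\rho$ on the left, and $\frac{\sqrt{\pi}}{\Gamma(\frac{3}{2})}\gamma(1,\frac{r^2}{2})=2(1-e^{-r^2/2})$ on the right (using $\Gamma(\frac{3}{2})=\sqrt{\pi}/2$ and $\gamma(1,x)=1-e^{-x}$). Solving gives $\cos\rho=2e^{-r^2/2}-1$, and the double-angle identity $\cos2\theta=2\cos^2\theta-1$ rewrites this as $\rho(r)=2\arccos(e^{-r^2/4})$. To obtain $\Phi_{\S^2}$ I would compose with $\exp_{\S^2}(v)=(\frac{v}{\norm{v}}\sin\norm{v},\cos\norm{v})$, writing $w=\Phi_{\R^2}(x)$: the last coordinate is $\cos\rho=2e^{-\norm{w}^2/2}-1$, and the first two are $\frac{w}{\norm{w}}\sin\rho$, where $\sin\rho=2e^{-\norm{w}^2/4}\sqrt{1-e^{-\norm{w}^2/2}}$ by $\sin2\theta=2\sin\theta\cos\theta$. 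This is precisely the displayed formula.

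Since every step is a specialization of results already proved, there is no genuine obstacle; the only care needed is bookkeeping with the special-function identities $\gamma(\frac{1}{2},x)=\sqrt{\pi}\,\erf(\sqrt{x})$ and $\gamma(1,x)=1-e^{-x}$, and with the half- and double-angle formulas used to put $\rho$ and the components of $\exp_{\S^n}$ into the compact closed forms stated. A minor subtlety worth verifying is that in low dimensions the radial direction factor $v/\norm{v}$ reduces to $\operatorname{sign}(v)$ for $n=1$ and combines correctly with the parities of $\sin$ and $\cos$.
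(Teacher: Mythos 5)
Your proposal is correct and follows essentially the same route as the paper: specialize \cref{th:maintech} with $\omega=\Omega/V$ and clear constants to recover the defining relation for $\rho$, obtain measure preservation of $\Phi_{\S^n}$ by composing $\Phi_{\R^n}$, $\varphi_{\S^n}$, and $\exp_{\S^n}$ (via \cref{lemma:exp_preserving}), and then carry out the same special-function and trigonometric computations for $n=1,2$ that the paper performs (in its proof and in Appendix~\ref{appendix}). Your use of the double-angle identity $\cos 2\theta=2\cos^2\theta-1$ for $n=2$ is algebraically identical to the paper's half-angle step $\sin^2\frac{\rho}{2}=\frac{1-\cos\rho}{2}=1-e^{-r^2/2}$.
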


\begin{proof}
From \cref{th:maintech} we just need to check that
$$
\int_0^\rho\frac{\Gamma(\frac{n+1}{2})}{2\pi^{(n+1)/2}}\sin^{n-1}r\,dr=\frac{1}{2\pi^{n/2}}\gamma\bigg(\frac{n}{2},\frac{r^2}{2}\bigg),
$$
which is equivalent to the formula in the corollary. The case $n=2$ reads
$$
\sin^2\frac{\rho}{2}=\frac{1-\cos \rho}{2}=\gamma\bigg(1,\frac{r^2}{2}\bigg)=1-e^{-r^2/2},
$$
which is equivalent to the last claim in the corollary.
\end{proof}

Note that the integral in the left-hand side of the expression in \cref{cor:sphere} is an incomplete beta function:
\begin{equation*}
    \int_0^\rho\sin^{n-1}r\,dr=2^{n-1}\Beta_{\sin^2(\rho/2)}\Big(\frac{n}{2},\frac{n}{2}\Big).
\end{equation*}
Hence, it is not possible to obtain a closed expression for $\Phi_{\S^n}$ when $n>2$. In \cref{sec:fiberbundles} we consider a different approach that provides measure-preserving mappings with closed expressions for odd-dimensional spheres.

\Cref{fig:pointsS2,fig:gridsS2} illustrate the measure-preserving mapping obtained in \cref{cor:sphere} for the particular case of the two-dimensional sphere $\S^2$.

\begin{figure}[htbp]

\begin{subfigure}[t]{0.46\textwidth}
\includegraphics[width=0.89\textwidth]{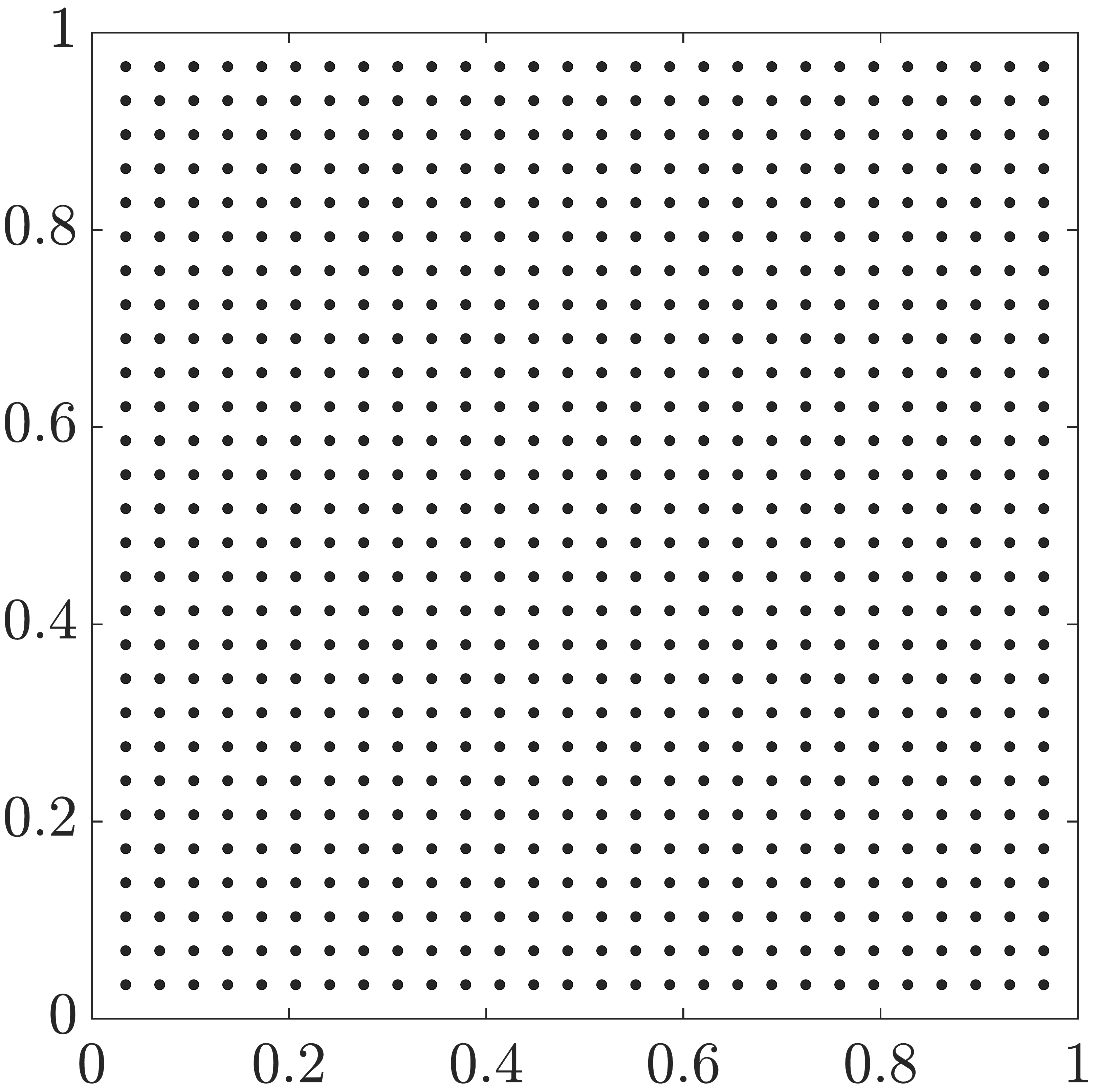}
\caption{Square mesh in $(0,1)^2$}
\label{fig:unifSquare}
\end{subfigure}
\hfill
\begin{subfigure}[t]{0.46\textwidth}
\includegraphics[width=0.89\textwidth]{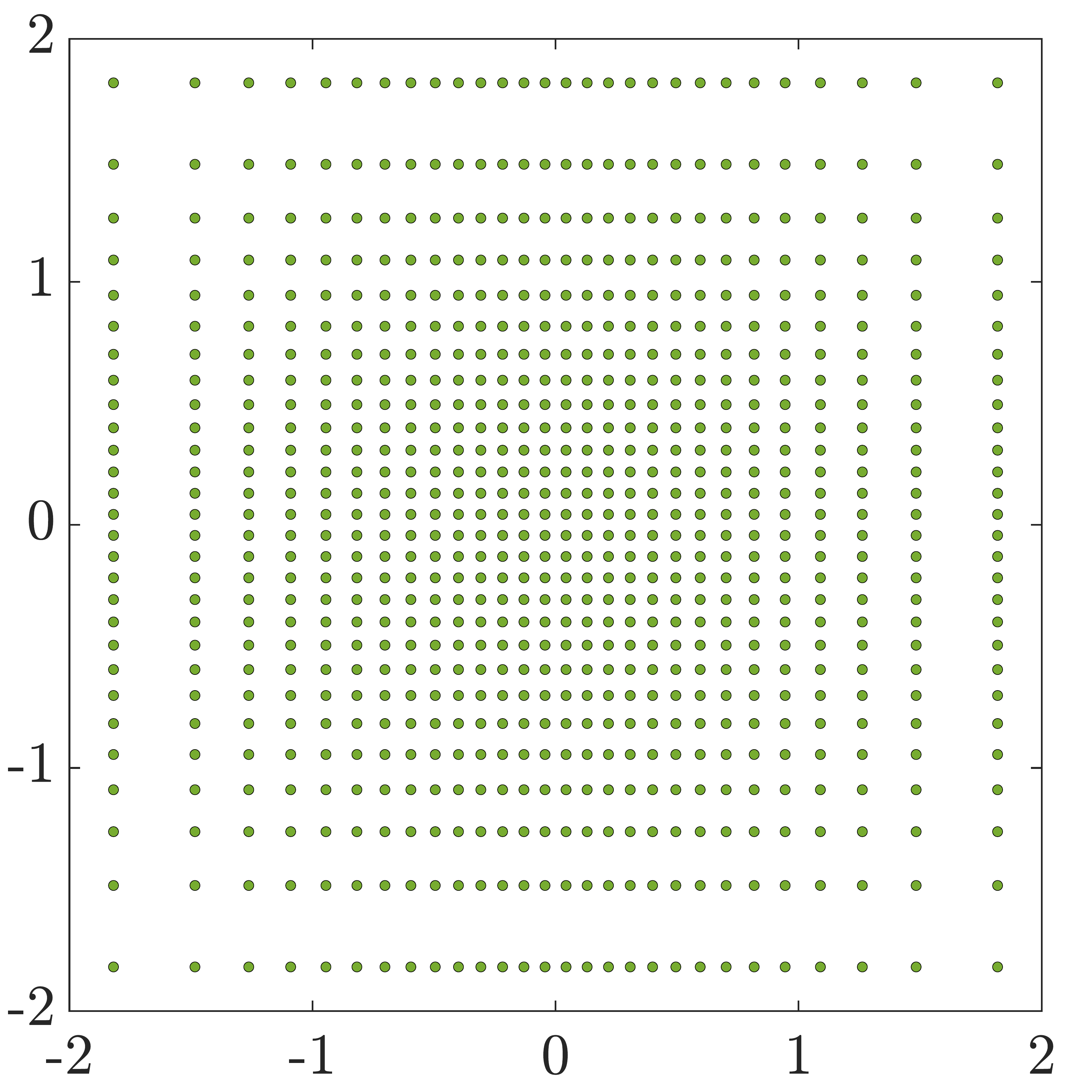}
\caption{Image of the mesh under $\Phi_{\R^2}$}
\label{fig:normalR2}
\end{subfigure}
\begin{subfigure}[t]{0.45\textwidth}
\hspace{0.2cm}\includegraphics[width=0.88\textwidth]{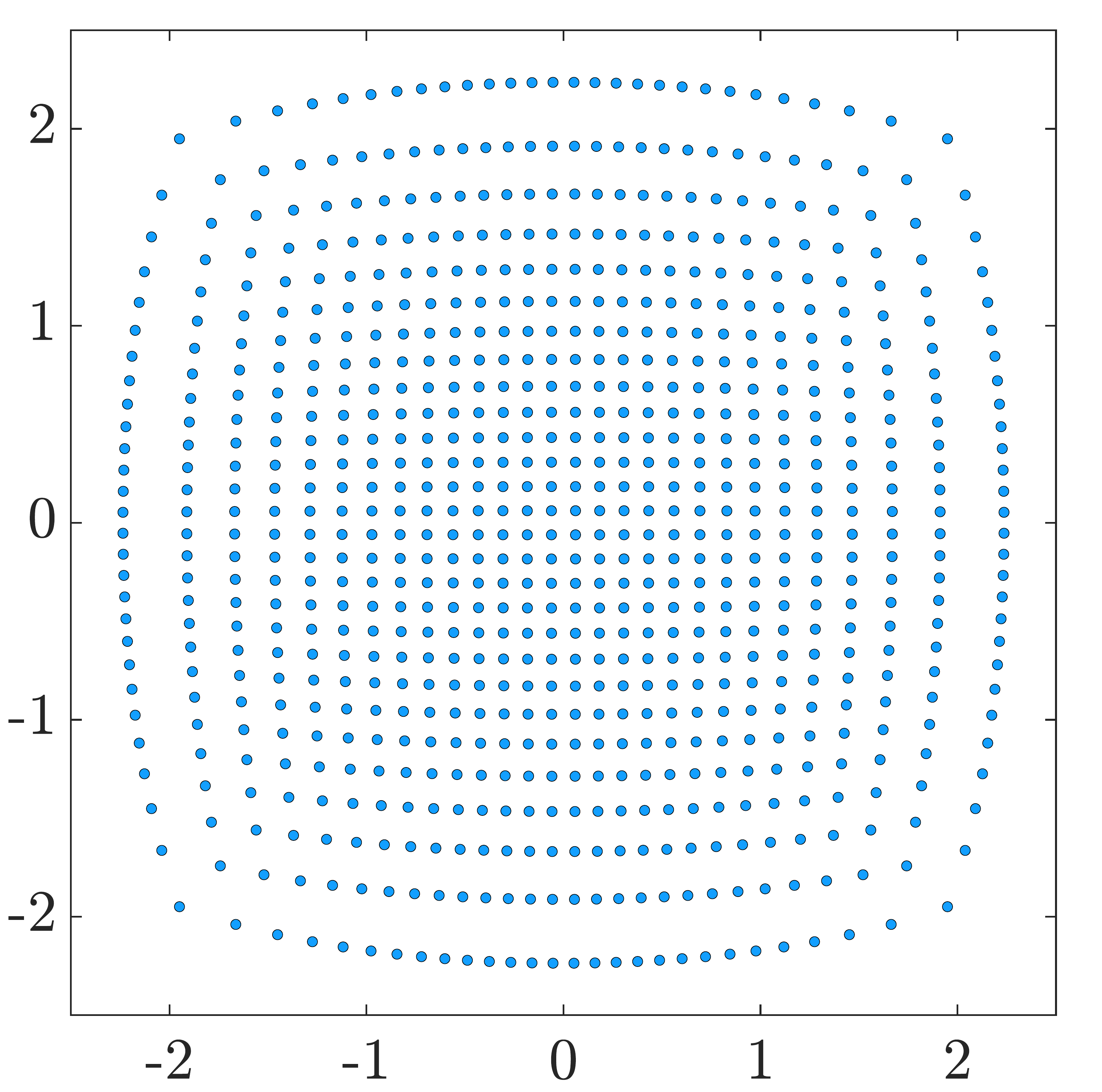}
\caption{Image of the mesh under $\varphi_{\S^2}\circ \Phi_{\R^2}$}
\label{fig:unifBall}
\end{subfigure}
\hfill
\begin{subfigure}[t]{0.47\textwidth}
\includegraphics[width=0.95\textwidth]{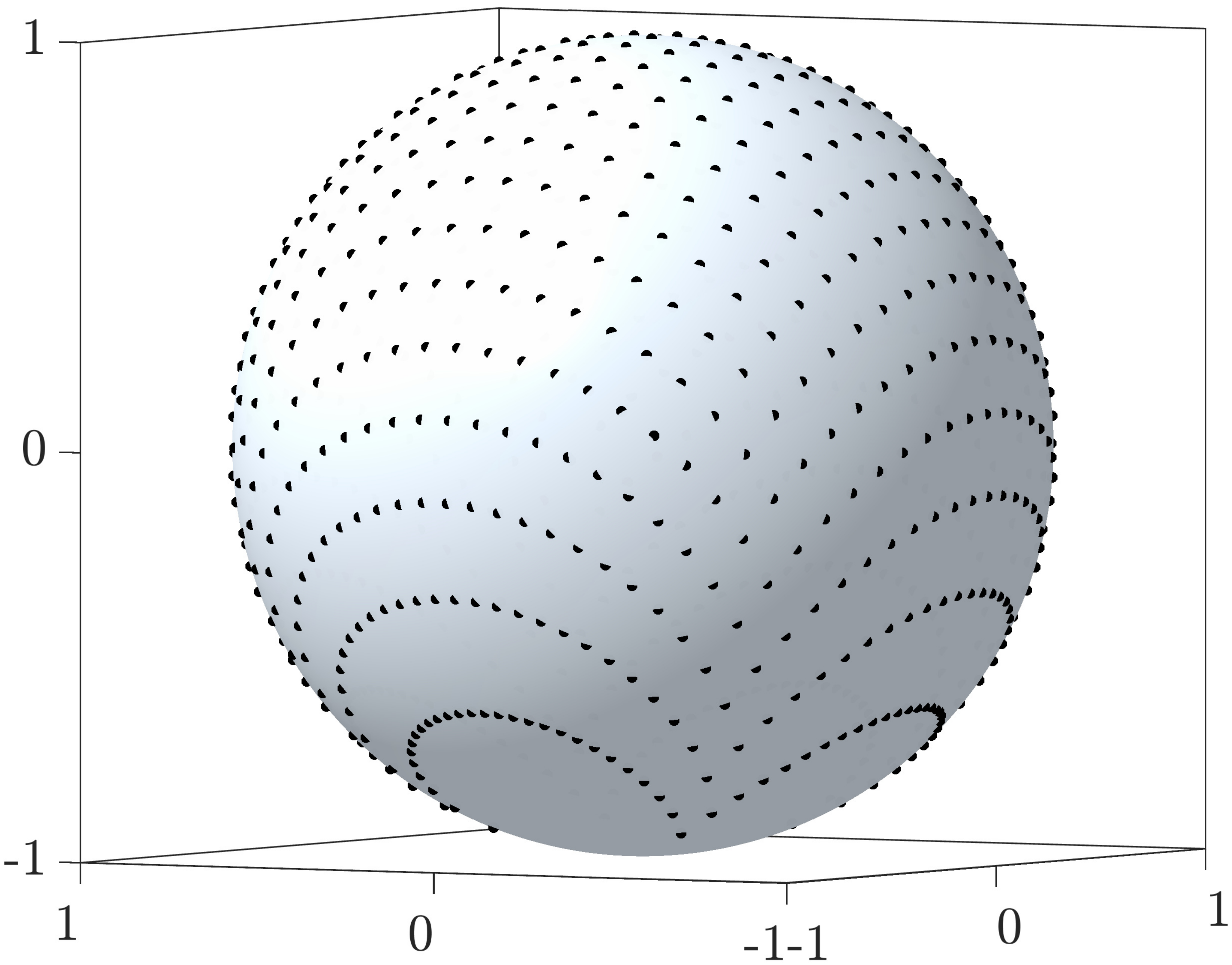}
\caption{Image of the mesh under $\mathrm{exp}_{\S^2}\circ\varphi_{\S^2}\circ\Phi_{\R^2}$, lateral view}
\label{fig:unifSphere}
\end{subfigure}
\hfill
\begin{subfigure}[t]{0.45\textwidth}
\includegraphics[width=0.89\textwidth]{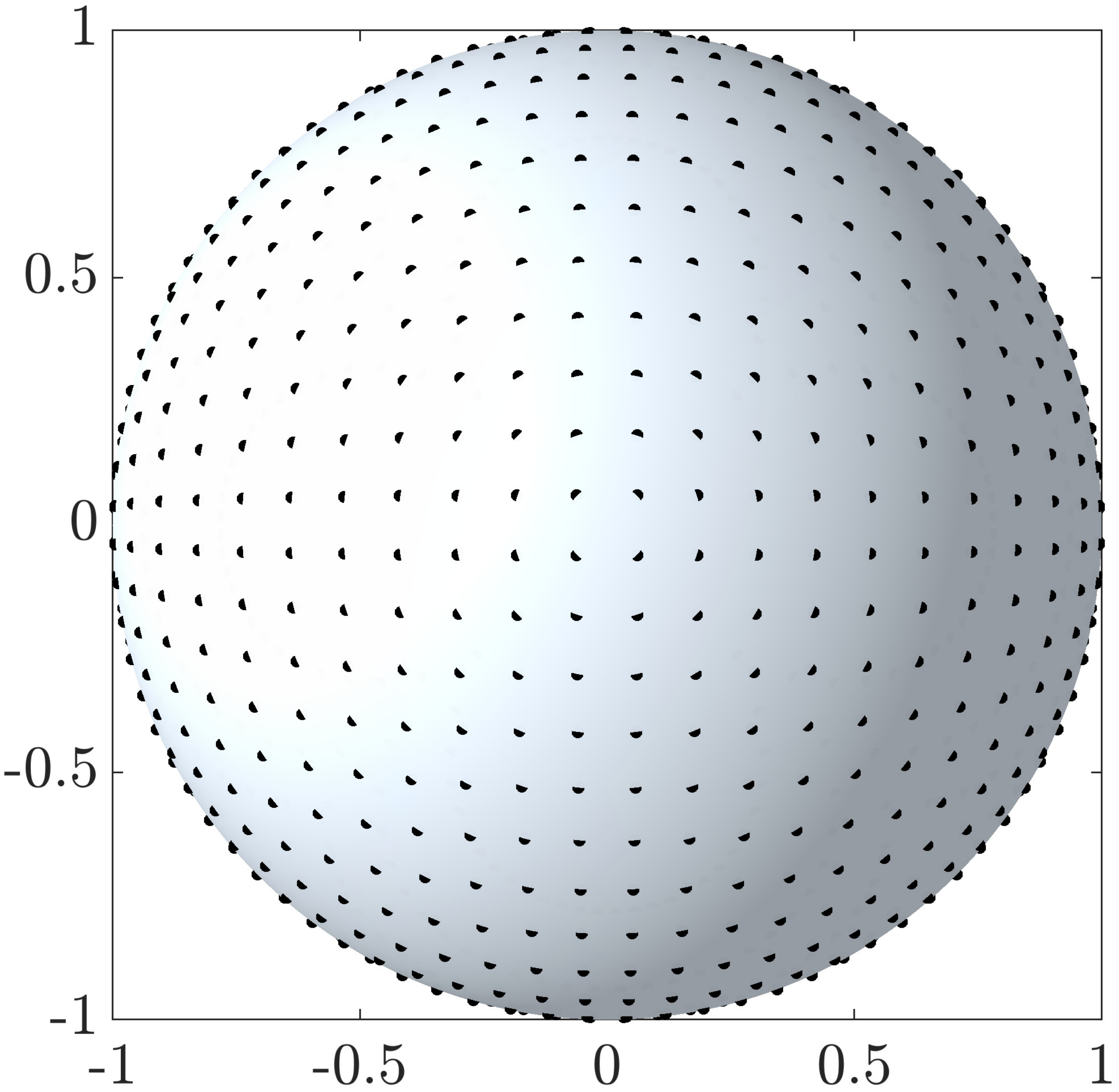}
\caption{Image of the mesh under $\mathrm{exp}_{\S^2}\circ\varphi_{\S^2}\circ\Phi_{\R^2}$, north pole view}
\label{fig:unifSphereNorth}
\end{subfigure}
\hfill
\begin{subfigure}[t]{0.45\textwidth}
\includegraphics[width=0.89\textwidth]{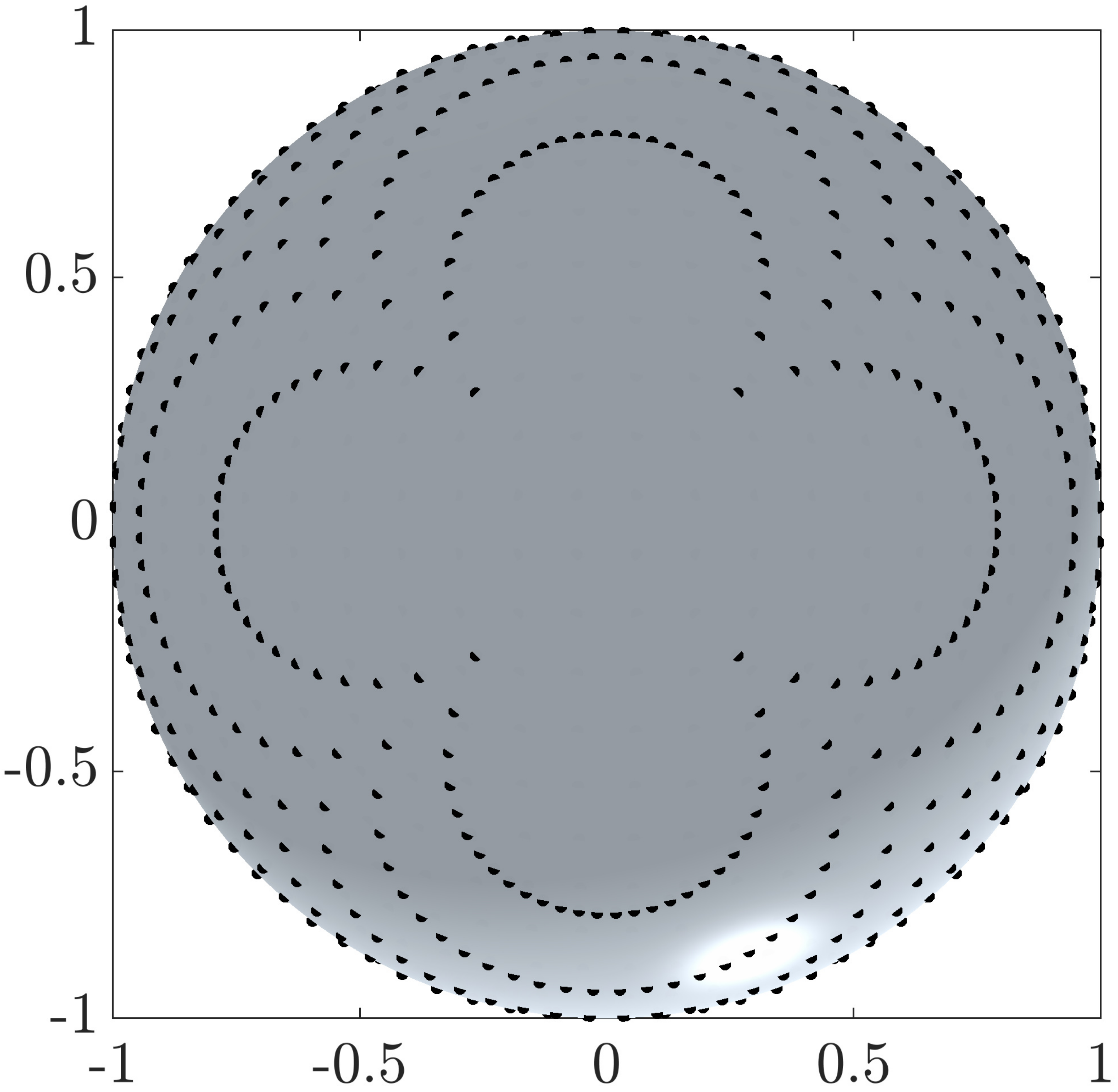}
\caption{Image of the mesh under $\mathrm{exp}_{\S^2}\circ\varphi_{\S^2}\circ\Phi_{\R^2}$, south pole view}
\label{fig:unifSphereSouth}
\end{subfigure}

\caption{The measure-preserving mapping $\Phi_{\S^2}=\exp_{\S^2}\composed\varphi_{\S^2}\composed\Phi_{\R^2}\from((0,1)^2,\unif)\to(\S^2,\unif)$ transforms points on $(0,1)^2$ into points on $\S^2$. For an initial collection of $784$ uniformly distributed points on $(0,1)^2$, we show the different steps from the unit square to the sphere.}
\label{fig:pointsS2}
\end{figure}

\begin{figure}[htbp]
\begin{subfigure}[t]{0.44\textwidth}
\raggedleft
\hspace{-0.6cm}
\includegraphics[width=\textwidth]{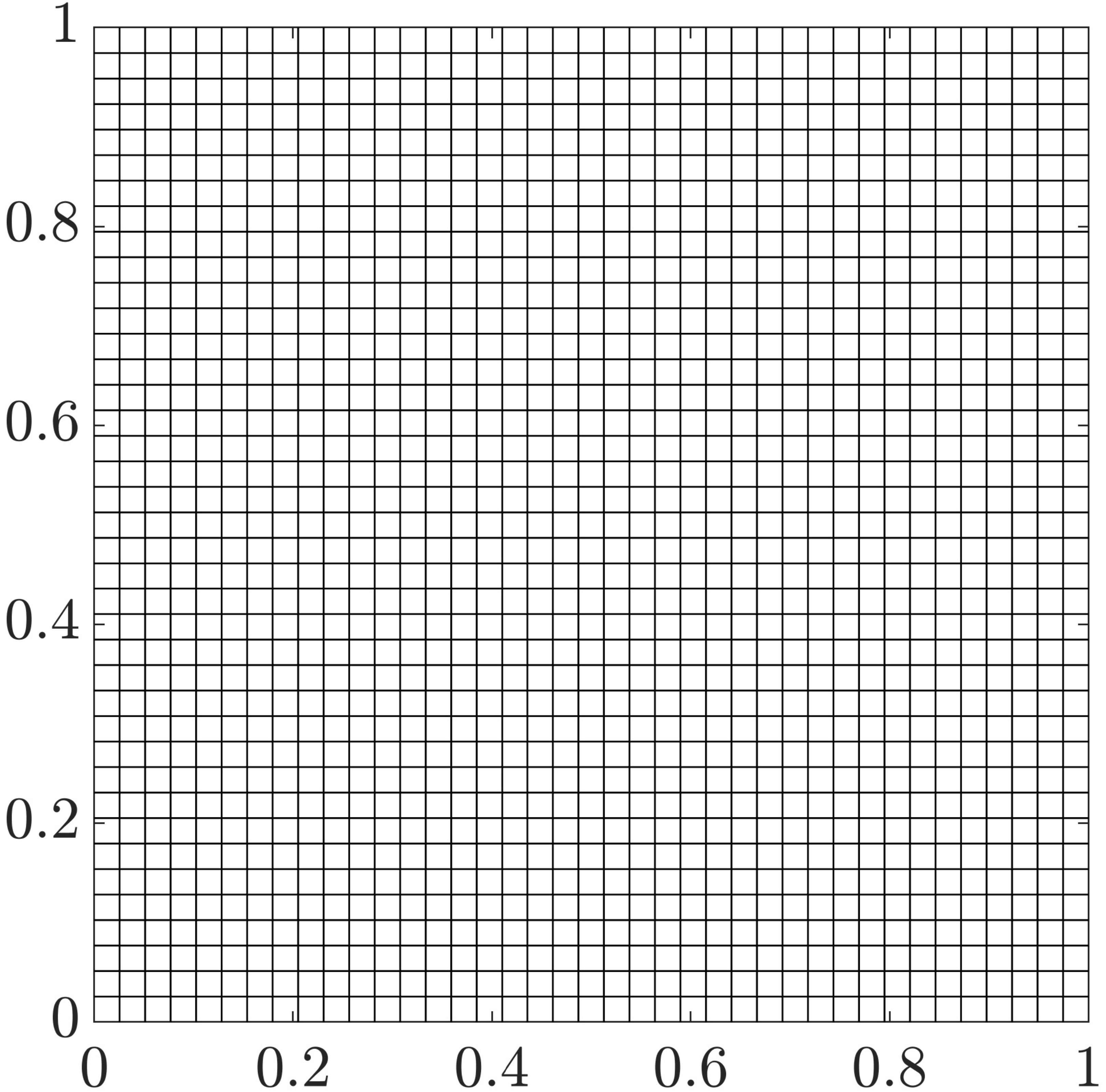}
\caption{Uniform grid in $(0,1)^2$}
\label{fig:GridSquare}
\end{subfigure}
\hfill
\begin{subfigure}[t]{0.5\textwidth}
\includegraphics[width=\textwidth]{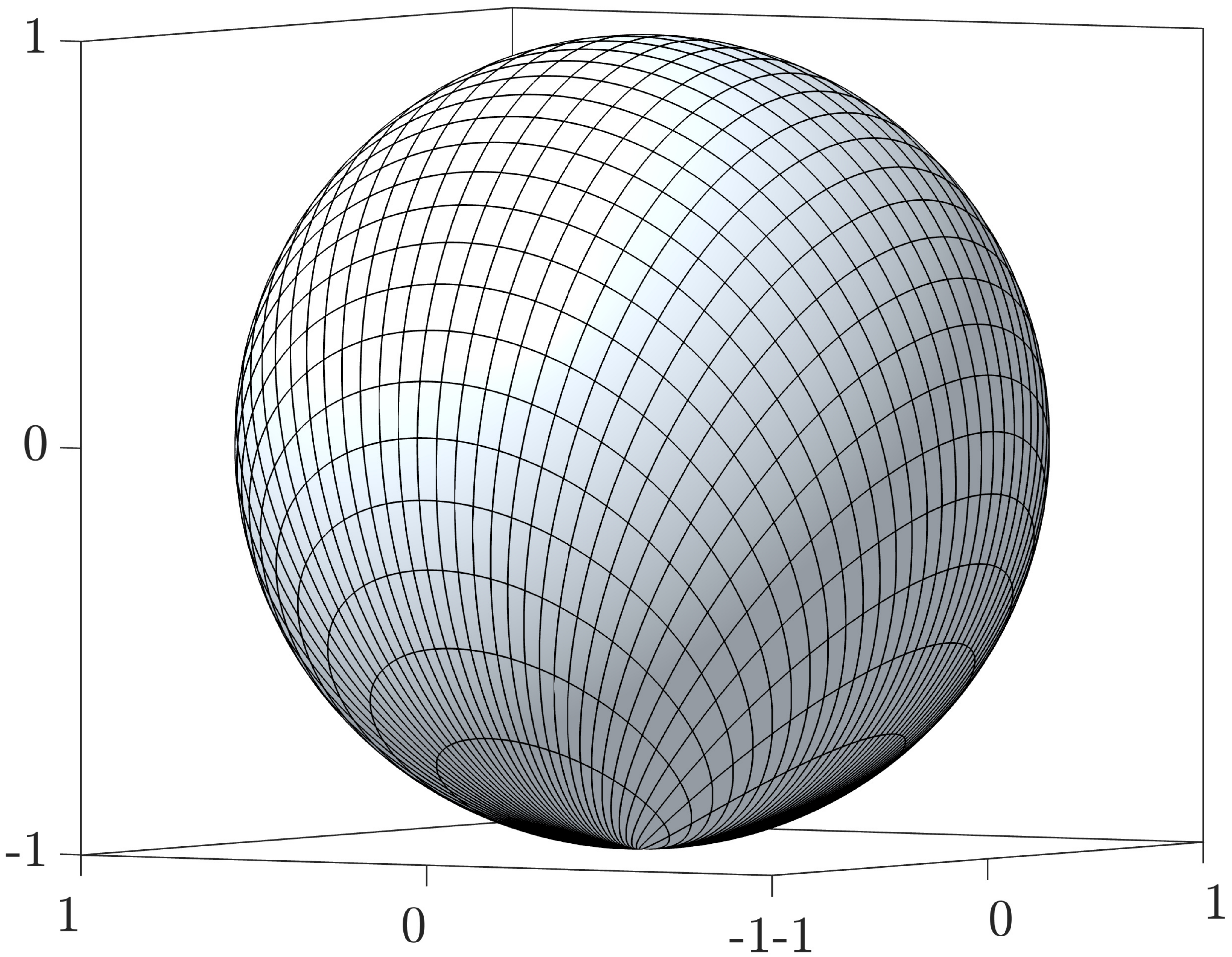}
\caption{Image of the grid in $\S^2$}
\label{fig:GridSphere}
\end{subfigure}
\hfill
\begin{subfigure}[t]{0.45\textwidth}
\includegraphics[width=\textwidth]{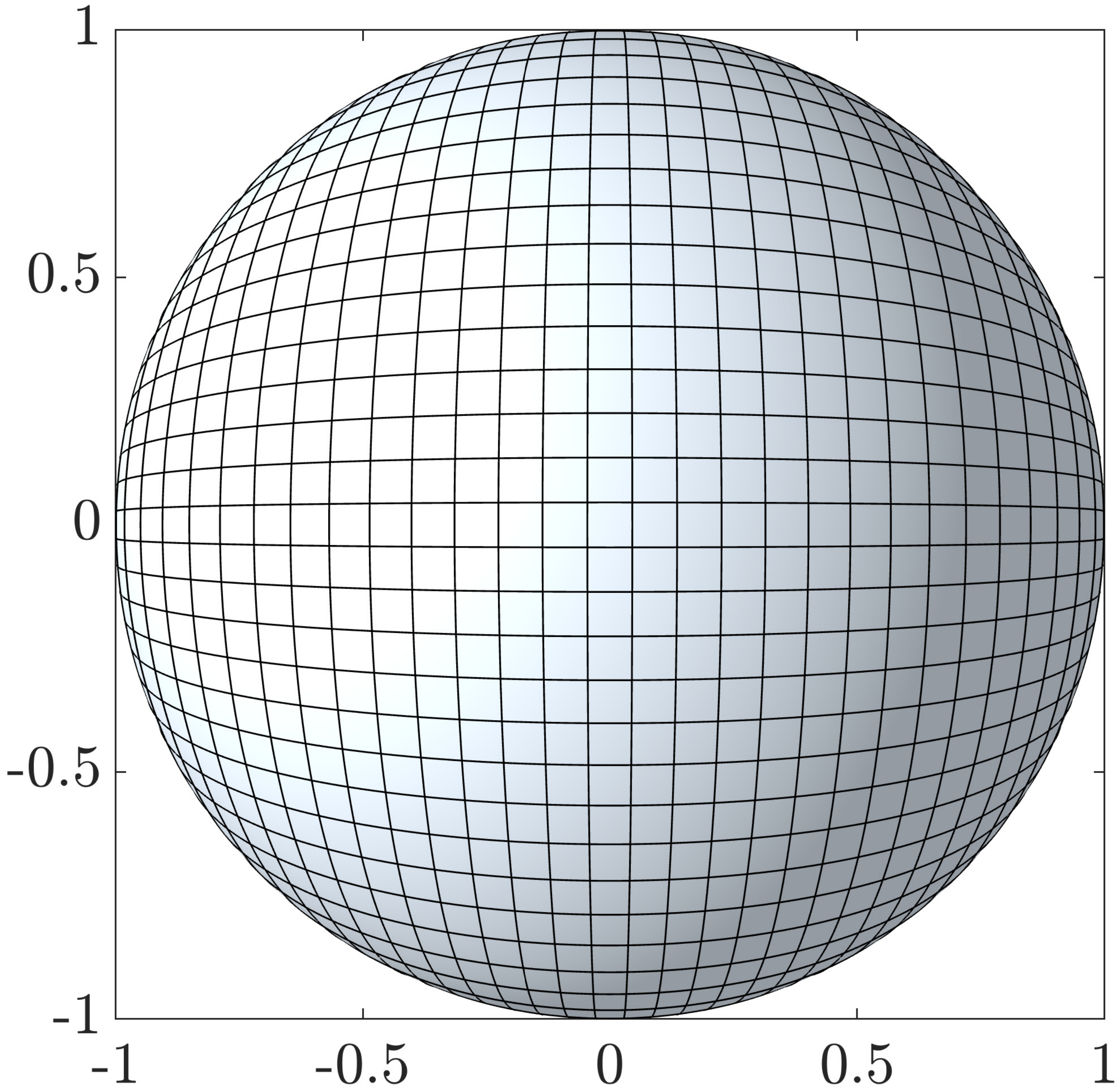}
\caption{Image of the grid in $\S^2$, north pole view}
\label{fig:GridSphereNorth}
\end{subfigure}
\hfill
\begin{subfigure}[t]{0.45\textwidth}
\includegraphics[width=\textwidth]{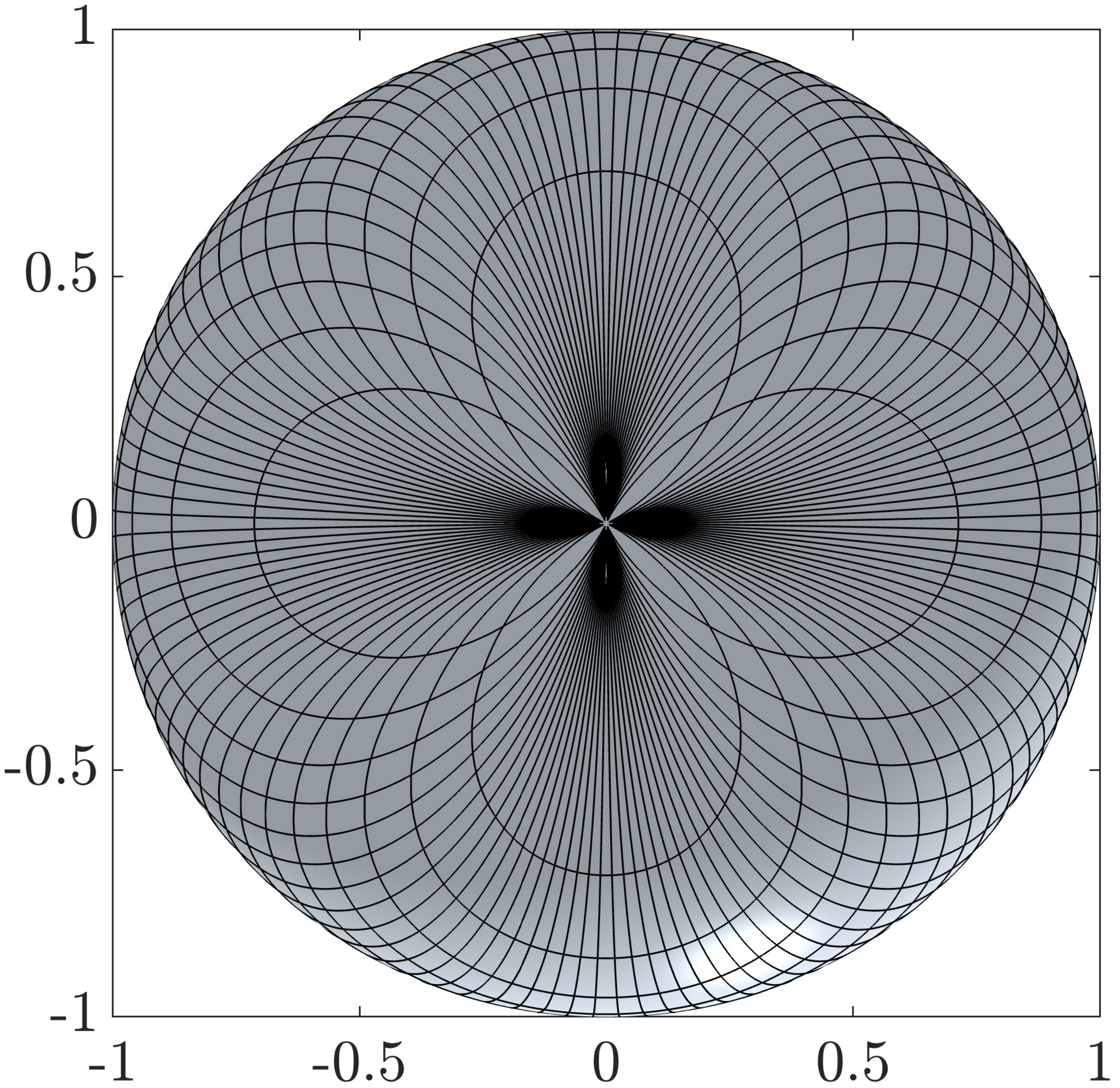}
\caption{Image of the grid in $\S^2$, south pole view}
\label{fig:GridSphereSouth}
\end{subfigure}

\caption{The measure-preserving mapping $\Phi_{\S^2}=\exp_{\S^2}\composed\varphi_{\S^2}\composed\Phi_{\R^2}\from((0,1)^2,\unif)\to(\S^2,\unif)$ transforms uniform grids in $(0,1)^2$ into uniform grids in $\S^2$. We show the image of a grid in $(0,1)^2$ formed by 1369 cells.} 
\label{fig:gridsS2}
\end{figure}

\subsection{The real projective space $\mathbb{RP}^n$}

In this case, we have $d=n$, $D=\pi/2$, and
\begin{equation*}
	\omega(r)=\frac{\Omega(r)}{V}=\frac{\Gamma(\frac{n+1}{2})}{\pi^{(n+1)/2}}\frac{\sin^{n-1}r}{r^{n-1}}.
\end{equation*}

\begin{corollary}\label{cor:RPn}
	The mapping $\varphi_{\RP^n}\from(\R^n,\mu_{c=1})\to(B^n(0,\pi/2),\mu_{\omega=\Omega/V})$ given by $\varphi_{\RP^n}(x)=x\rho(\norm{x})/\norm{x}$ is measure preserving if $\rho=\rho(r)$ satisfies
	\begin{equation*}
		\int_0^\rho\sin^{n-1}r\,dr=\frac{\sqrt{\pi}}{2\Gamma(\frac{n+1}{2})}\gamma\bigg(\frac{n}{2},\frac{r^2}{2}\bigg).
	\end{equation*}
	As a consequence, the mapping $\Phi_{\RP^n}=\exp_{\RP^n}\composed\varphi_{\RP^n}\composed\Phi_{\R^n}\from((0,1)^n,\unif)\to(\RP^n,\unif)$ is measure preserving. For $n=1$ we have
    \begin{equation*}
        \rho(r)=\frac{\sqrt{\pi}}{2}\gamma\bigg(\frac{1}{2},\frac{r^2}{2}\bigg)=\frac{\pi}{2}\erf\bigg(\frac{r}{\sqrt{2}}\bigg),
    \end{equation*}
    and so
    \begin{equation*}
        \Phi_{\RP^1}(x)=(-\cot \pi x,1).
    \end{equation*}
    For $n=2$ we can compute $\rho(r)$ explicitly:
	\begin{equation*}
	    \rho(r)=\arccos e^{-r^2/2},
	\end{equation*}
    and hence
    \begin{equation*}
        \Phi_{\RP^2}(x)=\bigg(\frac{\Phi_{\R^2}(x)}{\norm{\Phi_{\R^2}(x)}}\sqrt{e^{\norm{\Phi_{\R^2}(x)}^2}-1},1\bigg).
    \end{equation*}
\end{corollary}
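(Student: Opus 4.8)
The plan is to follow the proof of \cref{cor:sphere} almost verbatim, the only changes being the diameter $D=\pi/2$ and the volume $V=\pi^{(n+1)/2}/\Gamma(\frac{n+1}{2})$ read off from the $\RP^n$ row of \cref{table:dov}, which enter through the weight $\omega(r)=\frac{\Gamma((n+1)/2)}{\pi^{(n+1)/2}}\frac{\sin^{n-1}r}{r^{n-1}}$. First I would apply \cref{th:maintech} with $c=1$, $d=n$, and $R=\pi/2$: since the factor $s^{n-1}$ in $G(\rho)=\int_0^\rho\omega(s)s^{n-1}\,ds$ cancels the denominator of $\omega$, the defining relation becomes
\[
\frac{\Gamma(\frac{n+1}{2})}{\pi^{(n+1)/2}}\int_0^\rho\sin^{n-1}s\,ds=\frac{1}{2\pi^{n/2}}\gamma\bigg(\frac{n}{2},\frac{r^2}{2}\bigg),
\]
and isolating the integral (using $\pi^{(n+1)/2}/\pi^{n/2}=\sqrt\pi$) reproduces exactly the formula in the statement. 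The existence and uniqueness of $\rho$ and the measure-preservation of $\varphi_{\RP^n}$ are then immediate from \cref{th:maintech}, and the claim for $\Phi_{\RP^n}$ follows by composing with $\exp_{\RP^n}$—measure preserving by \cref{lemma:exp_preserving}—and with $\Phi_{\R^n}$.

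For $n=1$ the integrand reduces to $1$, so the left-hand side collapses to $\rho$ and gives $\rho(r)=\frac{\sqrt\pi}{2}\gamma(\frac12,\frac{r^2}{2})$; the substitution $s=u^2$ identifies $\gamma(\frac12,\cdot)$ with the error function and yields $\rho(r)=\frac\pi2\erf(r/\sqrt2)$. Feeding $\Phi_{\R^1}(x)=\sqrt2\,\erf^{-1}(2x-1)$ into $\varphi_{\RP^1}$ makes $\erf$ and $\erf^{-1}$ cancel, leaving the signed coordinate $\frac\pi2(2x-1)=\pi x-\frac\pi2$ in $B^1(0,\pi/2)$; applying $\exp_{\RP^1}(t)=(\tan t,1)$ (the sign and the oddness of $\tan$ collapse $\frac{v}{\norm{v}}\tan\norm{v}$ to $\tan t$) together with the identity $\tan(\pi x-\frac\pi2)=-\cot\pi x$ then produces $\Phi_{\RP^1}(x)=(-\cot\pi x,1)$.

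For $n=2$ I would combine $\int_0^\rho\sin s\,ds=1-\cos\rho$, the value $\Gamma(\frac32)=\frac{\sqrt\pi}{2}$, and $\gamma(1,\frac{r^2}{2})=1-e^{-r^2/2}$ to reduce the defining relation to $\cos\rho=e^{-r^2/2}$, that is, $\rho=\arccos e^{-r^2/2}$. The displayed expression then comes from composing with $\exp_{\RP^2}(v)=(\frac{v}{\norm{v}}\tan\norm{v},1)$ and simplifying $\tan\rho=\sqrt{1-e^{-r^2}}\,/\,e^{-r^2/2}=\sqrt{e^{r^2}-1}$. I do not expect any genuine obstacle, as the argument is a direct transcription of the spherical case; the only mildly delicate points are the two trigonometric simplifications—$\tan(\pi x-\frac\pi2)=-\cot\pi x$ and $\tan(\arccos e^{-r^2/2})=\sqrt{e^{r^2}-1}$—and keeping track of the extra factor $\tfrac12$ relative to $\S^n$ that reflects $\vol(\RP^n)=\tfrac12\vol(\S^n)$.
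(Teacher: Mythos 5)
Your proposal is correct and follows the paper's own route exactly: it invokes \cref{th:maintech} with $\omega=\Omega/V$ to reduce the defining relation to the displayed integral identity (the factor $\sqrt{\pi}/2$ arising from $\vol(\RP^n)=\tfrac12\vol(\S^n)$), then composes with $\exp_{\RP^n}$ via \cref{lemma:exp_preserving} and $\Phi_{\R^n}$, and the $n=1$ and $n=2$ simplifications you describe (the $\erf$/$\erf^{-1}$ cancellation, $\tan(\pi x-\tfrac\pi2)=-\cot\pi x$, and $\tan\arccos e^{-r^2/2}=\sqrt{e^{r^2}-1}$) are precisely the computations the paper carries out in its appendix.
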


\begin{proof}
From \cref{th:maintech} we just need to check that
$$
\int_0^\rho\frac{\Gamma(\frac{n+1}{2})}{\pi^{(n+1)/2}}\sin^{n-1}r\,dr=\frac{1}{2\pi^{n/2}}\gamma\bigg(\frac{n}{2},\frac{r^2}{2}\bigg),
$$
which is equivalent to the formula in the corollary. The case $n=2$ reads
$$
1-\cos\rho=\gamma\bigg(1,\frac{r^2}{2}\bigg)=1-e^{-r^2/2},
$$
which is equivalent to the last claim in the corollary.
\end{proof}

\subsection{The complex projective space $\CP^n$}
In this case, we have $d=2n$, $D=\pi/2$, and
\begin{equation*}
	\omega(r)=\frac{\Omega(r)}{V}=\frac{n!}{\pi^n}\frac{\sin^{2n-1}{r}\cos{r}}{r^{2n-1}}.
\end{equation*}

\begin{corollary}\label{cor:CPn}
	The mapping $\varphi_{\CP^n}\from(\R^{2n},\mu_{c=1})\to(B^{2n}(0,\pi/2),\mu_{\omega=\Omega/V})$ given by $\varphi_{\CP^n}(x)=x\rho(\norm{x})/\norm{x}$ is measure preserving if $\rho=\rho(r)$ satisfies
	\begin{equation*}
		\rho(r)=\arcsin\Bigg(\bigg(\frac{1}{(n-1)!}\gamma\bigg(n,\frac{r^2}{2}\bigg)\bigg)^{1/(2n)}\Bigg).
	\end{equation*}
    As a consequence, the mapping $\Phi_{\CP^n}=\exp_{\CP^n}\composed\varphi_{\CP^n}\composed\Phi_{\R^{2n}}\from((0,1)^{2n},\unif)\to(\CP^n,\unif)$ is measure preserving.
\end{corollary}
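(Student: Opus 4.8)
The plan is to apply \cref{th:maintech} directly, exactly as in the proofs of \cref{cor:sphere} and \cref{cor:RPn}, the only work being to check that the displayed formula for $\rho$ is precisely the solution of the defining equation \eqref{eq:rho} in this case. Here $c=1$ and $d=2n$, so $2\pi^{d/2}=2\pi^n$ and the right-hand side of \eqref{eq:rho} is $\tfrac{1}{2\pi^n}\gamma\big(n,\tfrac{r^2}{2}\big)$. The key simplification I would exploit first is that, with $\omega(s)=\tfrac{n!}{\pi^n}\,\sin^{2n-1}s\cos s\,/\,s^{2n-1}$, the factor $s^{d-1}=s^{2n-1}$ in the integrand $\omega(s)s^{d-1}$ cancels the denominator of $\omega$, so that the seemingly transcendental quantity $G(\rho)$ reduces to an elementary integral,
\begin{equation*}
G(\rho)=\int_0^\rho\omega(s)s^{2n-1}\,ds=\frac{n!}{\pi^n}\int_0^\rho\sin^{2n-1}s\,\cos s\,ds.
\end{equation*}

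Next I would evaluate this integral by the substitution $u=\sin s$, $du=\cos s\,ds$, which yields $\int_0^{\sin\rho}u^{2n-1}\,du=\sin^{2n}\rho/(2n)$ and hence $G(\rho)=\tfrac{(n-1)!}{2\pi^n}\sin^{2n}\rho$ after using $n!/(2n)=(n-1)!/2$. Equating this with the right-hand side of \eqref{eq:rho} and cancelling the common factor $\tfrac{1}{2\pi^n}$ gives $\sin^{2n}\rho=\tfrac{1}{(n-1)!}\gamma\big(n,\tfrac{r^2}{2}\big)$, and solving for $\rho$ produces exactly the stated arcsine formula. I would briefly note the consistency of the domain: since $\gamma(n,\cdot)$ increases from $0$ to $\Gamma(n)=(n-1)!$, the argument of the arcsine lies in $[0,1)$, so $\rho(r)\in[0,\pi/2)$ and $\rho(r)\to\pi/2=D$ as $r\to\infty$, in agreement with the bijectivity assertion of \cref{th:maintech}. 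The ``as a consequence'' statement is then immediate: composing the measure-preserving maps $\Phi_{\R^{2n}}$, $\varphi_{\CP^n}$ (by \cref{th:maintech}), and $\exp_{\CP^n}$ (by \cref{lemma:exp_preserving}) shows that $\Phi_{\CP^n}$ is measure preserving.

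I do not anticipate a genuine obstacle: unlike the odd-power sine integrals appearing for $\S^n$ and $\RP^n$ (which are incomplete beta functions and resist closed-form inversion for $n>2$), the presence of the extra factor $\cos r$ in the $\CP^n$ volume density makes the relevant antiderivative an exact power of $\sin\rho$, so the inversion is fully explicit for every $n$. The only points warranting care are the bookkeeping of the constants $n!/(2n)=(n-1)!/2$ and the verification that the argument of the arcsine never reaches $1$, both of which are routine.
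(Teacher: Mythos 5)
Your proposal is correct and follows exactly the paper's own route: invoke \cref{th:maintech} with $d=2n$, $c=1$, note that the factor $s^{2n-1}$ cancels so that $G(\rho)=\frac{n!}{\pi^n}\int_0^\rho\sin^{2n-1}s\cos s\,ds=\frac{(n-1)!}{2\pi^n}\sin^{2n}\rho$, and equate with $\frac{1}{2\pi^n}\gamma\big(n,\frac{r^2}{2}\big)$ to get the arcsine formula, with the composition statement following from \cref{lemma:exp_preserving} and the measure-preservation of $\Phi_{\R^{2n}}$. Your added checks (the constant $n!/(2n)=(n-1)!/2$ and that the arcsine argument stays in $[0,1)$) are routine details the paper leaves implicit.
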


\begin{proof}
	From \cref{th:maintech} we just need to check that
	\begin{equation*}
		\int_{0}^{\rho}\frac{n!}{\pi^n}\sin^{2n-1}{r}\cos{r}\,dr=\frac{1}{2\pi^n}\gamma\bigg(n,\frac{r^2}{2}\bigg),
	\end{equation*}
	which is equivalent to
	\begin{equation*}
		\sin^{2n}(\rho)=\frac{1}{(n-1)!}\gamma\bigg(n,\frac{r^2}{2}\bigg),
	\end{equation*}
	and the corollary follows. 
\end{proof}

\subsection{The quaternionic projective space $\HP^n$}

In this case, we have $d=4n$, $D=\pi/2$, and
\begin{equation*}
	\omega(r)=\frac{\Omega(r)}{V}=\frac{(2n+1)!}{\pi^{2n}}\frac{\sin^{4n-1}{r}\cos^3{r}}{r^{4n-1}}.
\end{equation*}

\begin{corollary}\label{cor:HPn}
	The mapping $\varphi_{\HP^n}\from(\R^{4n},\mu_{c=1})\to(B^{4n}(0,\pi/2),\mu_{\omega=\Omega/V})$ given by $\varphi_{\HP^n}(x)=x\rho(\norm{x})/\norm{x}$ is measure preserving if $\rho=\rho(r)$ satisfies
	\begin{equation*}
		\int_0^\rho\sin^{4n-1}{r}\cos^3{r}\,dr=\frac{1}{2(2n+1)!}\gamma\bigg( 2n,\frac{r^2}{2}\bigg).
	\end{equation*}
	As a consequence, the mapping $\Phi_{\HP^n}=\exp_{\HP^n}\composed\varphi_{\HP^n}\composed\Phi_{\R^{4n}}\from((0,1)^{4n},\unif)\to(\HP^n,\unif)$ is measure preserving.
\end{corollary}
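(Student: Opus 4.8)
The plan is to apply \cref{th:maintech} with the data of $\HP^n$ recorded just above the statement, namely $c=1$, $d=4n$, $R=\pi/2$, and the weight $\omega(r)=\frac{(2n+1)!}{\pi^{2n}}\frac{\sin^{4n-1}r\cos^3 r}{r^{4n-1}}$. First I would substitute this $\omega$ into the quantity $G(\rho)=\int_0^\rho\omega(s)s^{d-1}\,ds$ appearing in \cref{th:maintech}. The factor $s^{d-1}=s^{4n-1}$ cancels exactly against the denominator $s^{4n-1}$ of $\omega$, leaving $G(\rho)=\frac{(2n+1)!}{\pi^{2n}}\int_0^\rho\sin^{4n-1}s\cos^3 s\,ds$ with no remaining $r$-powers to worry about.

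Next I would write out the defining relation \eqref{eq:rho} of \cref{th:maintech}. Since $d/2=2n$ and $c=1$, its right-hand side is $\frac{1}{2\pi^{2n}}\gamma(2n,\frac{r^2}{2})$. Equating this with the expression for $G(\rho)$ obtained above and dividing both sides by the common prefactor $(2n+1)!/\pi^{2n}$ yields precisely the integral equation in the statement, so $\varphi_{\HP^n}$ is measure preserving. The well-definedness of $\rho$ as a bijection is not a separate concern here: it is guaranteed by the general argument inside \cref{th:maintech} once $\omega>0$ and \eqref{eq:peso1} hold, and I would note that both follow from the entries for $\HP^n$ in \cref{table:dov}. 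The final assertion that $\Phi_{\HP^n}$ is measure preserving then follows immediately from \cref{thm:mainresult_cross}, exactly as in the preceding corollaries.

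The verification itself is routine, so I do not expect a genuine obstacle in proving the stated equation; the only point worth flagging is why the corollary stops at an implicit relation rather than an explicit $\rho$ as in \cref{cor:CPn}. The substitution $u=\sin r$ does evaluate the left-hand integral in closed form, since $\cos^3 r\,dr=(1-u^2)\,du$ produces $\frac{\sin^{4n}\rho}{4n}-\frac{\sin^{4n+2}\rho}{4n+2}$. However, the presence of $\cos^3 r$ (rather than the single factor $\cos r$ of the complex case) turns the defining relation into a polynomial equation of degree $2n+1$ in $\sin^2\rho$, which cannot be solved for $\rho$ by radicals in general. This is exactly why the statement is phrased implicitly, and I would record this observation briefly instead of attempting a closed form.
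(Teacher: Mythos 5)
Your proposal is correct and takes essentially the same route as the paper: both apply \cref{th:maintech} with $c=1$, $d=4n$, $R=\pi/2$ and $\omega=\Omega/V$, cancel the $s^{4n-1}$ factor in $G(\rho)$, and rescale by $(2n+1)!/\pi^{2n}$ to obtain exactly the stated integral equation, with the final claim following by composition with $\exp_{\HP^n}$ and $\Phi_{\R^{4n}}$. Your closing observation about the degree-$(2n+1)$ polynomial in $\sin^2\rho$ explaining the implicit formulation is a reasonable extra remark not present in the paper, but it is not needed for the proof.
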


\begin{proof}
From \cref{th:maintech} we just need to check that
\begin{equation*}
    \int_{0}^{\rho}\frac{(2n+1)!}{\pi^{2n}}\sin^{4n-1}r\cos^{3}r\,dr=\frac{1}{2\pi^{2n}}\gamma\bigg( 2n,\frac{r^2}{2}\bigg),
\end{equation*}
which is equivalent to the formula in the corollary.
\end{proof}

\subsection{The Cayley plane $\OP^2$}

In this case, we have $d=16$, $D=\pi/2$, and
\begin{equation*}
	\omega(r)=\frac{\Omega(r)}{V}=\frac{1320\,\Gamma(8)}{\pi^{8}}\frac{\sin^{15}{r}\cos^7{r}}{r^{15}}.
\end{equation*}

\begin{corollary}\label{cor:OPn}
	The mapping $\varphi_{\OP^2}\from(\R^{16},\mu_{c=1})\to(B^{16}(0,\pi/2),\mu_{\omega=\Omega/V})$ given by $\varphi_{\OP^2}(x)=x\rho(\norm{x})/\norm{x}$ is measure preserving if $\rho=\rho(r)$ satisfies
	\begin{equation*}
		\int_0^\rho\sin^{15}{r}\cos^7{r}\,dr=\frac{1}{2640\,\Gamma(8)}\gamma\bigg(8,\frac{r^2}{2}\bigg).
	\end{equation*}
	As a consequence, the mapping $\Phi_{\OP^2}=\exp_{\OP^2}\composed\varphi_{\OP^2}\composed\Phi_{\R^{16}}\from((0,1)^{16},\unif)\to(\OP^2,\unif)$ is measure preserving.
\end{corollary}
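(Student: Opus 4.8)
The plan is to specialize \cref{th:maintech} to the data of the Cayley plane, exactly as was done for $\S^n$, $\RP^n$, $\CP^n$, and $\HP^n$, and then simply read off the resulting equation for $\rho$. Concretely, I would invoke \cref{th:maintech} with $c=1$, $R=D=\pi/2$, $d=16$, and the weight $\omega(r)=\Omega(r)/V=\frac{1320\,\Gamma(8)}{\pi^8}\frac{\sin^{15}r\cos^7 r}{r^{15}}$ recorded above (taken from \cref{table:dov}). The theorem then guarantees that $\varphi_{\OP^2}(x)=x\rho(\norm{x})/\norm{x}$ is measure preserving from $(\R^{16},\mu_{c=1})$ to $(B^{16}(0,\pi/2),\mu_{\omega=\Omega/V})$ as soon as $\rho=\rho(r)$ solves $G(\rho)=\frac{1}{2\pi^{8}}\gamma(8,\frac{r^2}{2})$, where $G(\rho)=\int_0^\rho\omega(s)s^{15}\,ds$.

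The only computation to perform is the evaluation of $G$, and here nothing more than a cancellation occurs: the factor $s^{d-1}=s^{15}$ in $G$ cancels the $s^{15}$ in the denominator of $\omega(s)$, leaving $G(\rho)=\frac{1320\,\Gamma(8)}{\pi^8}\int_0^\rho \sin^{15}s\cos^7 s\,ds$. Substituting this into the defining equation $G(\rho)=\frac{1}{2\pi^8}\gamma(8,\frac{r^2}{2})$ and cancelling the common factor $\pi^{-8}$ gives $1320\,\Gamma(8)\int_0^\rho\sin^{15}s\cos^7 s\,ds=\frac12\gamma(8,\frac{r^2}{2})$, which is precisely the equation $\int_0^\rho \sin^{15}s\cos^7 s\,ds=\frac{1}{2640\,\Gamma(8)}\gamma(8,\frac{r^2}{2})$ in the statement. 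This establishes that $\varphi_{\OP^2}$ is measure preserving.

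To finish, I would assemble the composition. Since $\Phi_{\R^{16}}$ is measure preserving from $((0,1)^{16},\unif)$ to $(\R^{16},\mu_{c=1})$ by \eqref{eq:PhiRd}, $\varphi_{\OP^2}$ is measure preserving by the previous paragraph, and $\exp_{\OP^2}$ is measure preserving from $(B^{16}(0,\pi/2),\mu_{\omega=\Omega/V})$ to $(\OP^2,\unif)$ by \cref{lemma:exp_preserving}, their composition $\Phi_{\OP^2}=\exp_{\OP^2}\composed\varphi_{\OP^2}\composed\Phi_{\R^{16}}$ is measure preserving; this is just \cref{thm:mainresult_cross} read off for $\M=\OP^2$.

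I do not expect any genuine obstacle, since the argument is a verbatim specialization of the scheme used for the other \glspl{cross}. The only points requiring a moment's care are (i) confirming that the tabulated $\omega$ indeed satisfies the normalization \eqref{eq:peso1}, so that \cref{th:maintech} applies and $\rho$ is a well-defined bijection onto $(0,\pi/2)$ (this holds because $\omega$ is continuous and strictly positive on $(0,\pi/2)$ and integrates against $s^{15}$ to $V/V=1$), and (ii) the bookkeeping of the constant, namely $\frac{1}{2\cdot 1320\,\Gamma(8)}=\frac{1}{2640\,\Gamma(8)}$. Unlike the $\CP^n$ case, I would not attempt to solve for $\rho$ explicitly, although one could in principle integrate the left-hand side in closed form via the substitution $u=\sin s$ (using $\cos^6 s=(1-u^2)^3$), obtaining a polynomial identity in $\sin^2\rho$; leaving the relation in integral form keeps the statement uncluttered.
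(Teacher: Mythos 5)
Your proposal is correct and follows exactly the paper's own argument: specialize \cref{th:maintech} with $d=16$, $c=1$, and $\omega=\Omega/V$ from \cref{table:dov}, cancel the $s^{15}$ factors and the common $\pi^{-8}$ to obtain the stated integral equation (with $2\cdot 1320\,\Gamma(8)=2640\,\Gamma(8)$), and then compose with $\Phi_{\R^{16}}$ and $\exp_{\OP^2}$ via \cref{lemma:exp_preserving}. The paper's proof is just a terser version of the same computation, so there is nothing to add.
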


\begin{proof}
From \cref{th:maintech} we just need to check that
\begin{equation*}
    \int_{0}^{\rho}\frac{1320\Gamma(8)}{\pi^8}\sin^{15}r\cos^{7}r\,dr=\frac{1}{2\pi^{8}}\gamma\bigg( 8,\frac{r^2}{2}\bigg),
\end{equation*}
which is equivalent to the formula in the corollary.
\end{proof}

In \cref{table:explicit-formulas} we show the cases for which we have a closed expression for the measure-preserving mapping $\Phi_{\M}$. In addition, in the next section we present an approach that will allow us to obtain measure-preserving mappings with explicit expressions for any odd-dimensional sphere.

\setlength{\tabcolsep}{13pt}
\begin{table}[htbp]
\begin{center}
    \caption{Summary of the manifolds for which we have a closed formula for the measure-preserving mapping $\Phi_{\M}$, where $\Phi_{\R^d}$ is as in \cref{eq:PhiRd}. The computations are straightforward from our main results; see Appendix \ref{appendix}.}
    \label{table:explicit-formulas}
	\begin{tabular}{ll}
		\toprule
		$\M$ & $\Phi_{\M}=\exp_{\M}\composed\varphi_\M\composed \Phi_{\R^d}\from((0,1)^d,\unif)\to(\M,\unif)$ \\
		\midrule
        $\B^n$ & $\displaystyle\frac{\Phi_{\R^n}(x)}{\norm{\Phi_{\R^n}(x)}}\Bigg(\frac{\gamma\big(\frac{n}{2},\frac{\norm{\Phi_{\R^n}(x)}^2}{2}\big)}{\Gamma(\frac{n}{2})} \Bigg)^{1/n}$\\[0.5cm]
		$\S^1$ & $\displaystyle(-\sin 2\pi x,-\cos 2\pi x)$\\[0.5cm]
        $\S^2$ & $\displaystyle\bigg(\frac{\Phi_{\R^2}(x)}{\norm{\Phi_{\R^2}(x)}}2e^{-\norm{\Phi_{\R^2}(x)}^2/4}\sqrt{1-e^{-\norm{\Phi_{\R^2}(x)}^2/2}},2e^{-\norm{\Phi_{\R^2}(x)}^2/2}-1\bigg)$\\[0.5cm]
        $\RP^1$ & $\displaystyle(-\cot\pi x,1)$\\[0.5cm]
        $\RP^2$ & $\displaystyle\bigg(\frac{\Phi_{\R^2}(x)}{\norm{\Phi_{\R^2}(x)}}\sqrt{e^{\norm{\Phi_{\R^2}(x)}^2}-1},1\bigg)$\\[0.5cm]
        $\CP^1$ & $\displaystyle\bigg(\frac{\Phi_{\R^2}(x)}{\norm{\Phi_{\R^2}(x)}}\sqrt{e^{\norm{\Phi_{\R^2}(x)}^2/2}-1},1\bigg)$\\[0.5cm]
        $\CP^n$ & $\displaystyle\Bigg(\frac{\Phi_{\R^{2n}}(x)}{\norm{\Phi_{\R^{2n}}(x)}}\bigg(-1+\frac{1}{1-\big(\frac{1}{(n-1)!}\gamma\big(n,\frac{\norm{\Phi_{\R^{2n}}(x)}^2}{2}\big)\big)^{1/n}}\bigg)^{1/2},1\Bigg)$\\
        \bottomrule
	\end{tabular}
\end{center}
\end{table}

\section{Measure-preserving mappings from the unit cube to fiber bundles}\label{sec:fiberbundles}

In this section we show how to construct measure-preserving mappings from the unit cube to the total space $E$ of the smooth fiber bundle $\fiberbundle{\pi}{F}{E}{B}$, where the total space $E$, the base space $B$, and the fiber $F$ are Riemannian manifolds, assuming that we have measure-preserving mappings from the corresponding unit cubes to $B$ and $F$. 

To prove \cref{thm:mainresult_bundles} we need the following lemma. The main technical tool used in its proof is the smooth coarea formula, an integral formula due to Federer~\cite{Federer1969} and Howard~\cite{Howard1993} that generalizes the change of variables formula and Fubini's theorem (see \cref{appendix:coarea}). We refer the interested reader to \cite[Section 2]{Beltran2011}.

\begin{lemma}\label{lemma:xi}
	Let $E$, $B$, and $F$ be finite-volume Riemannian manifolds, and let $\fiberbundle{\pi}{F}{E}{B}$ be a smooth fiber bundle such that $\NJac\pi(x)$ is constant for every $x\in E$. Let $\Psi_y\from F\to \pi^{-1}(y)$  be a measure-preserving mapping for every $y\in B$ and consider the mapping
	\begin{equation*}
		\longmap{\xi}{(B\times F,\unif)}{(E,\unif)}{(y,z)}{\Psi_y(z).}
	\end{equation*}
	If $\xi$ is measurable, then it is measure preserving. Moreover, if the measures in $E$, $B$, and $F$ are normalized to have unit volume, then $\NJac\pi(x)=1$ for every $x\in E$.
\end{lemma}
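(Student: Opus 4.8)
The plan is to verify the defining identity of a measure-preserving map directly: for every measurable $A\subseteq E$ I will show that $\unif_{B\times F}(\xi^{-1}(A))=\unif_E(A)$, where $\unif_{B\times F}=\unif_B\times\unif_F$ is the product of the unit-mass uniform measures (the normalized product volume coincides with the product of the normalized volumes). Since $\xi$ is assumed measurable and is fiberwise bijective, the $y$-slice of $\xi^{-1}(A)$ is $\Psi_y^{-1}(A\cap\pi^{-1}(y))$, so Fubini on the product probability space gives
\[
\unif_{B\times F}(\xi^{-1}(A))=\int_B \unif_F\big(\Psi_y^{-1}(A\cap\pi^{-1}(y))\big)\,d\unif_B(y).
\]
Because each $\Psi_y\from(F,\unif)\to(\pi^{-1}(y),\unif)$ is measure preserving, the inner quantity equals the normalized fiber measure $\unif_{\pi^{-1}(y)}(A\cap\pi^{-1}(y))=\frac{1}{\vol(\pi^{-1}(y))}\int_{\pi^{-1}(y)}\chi_A\,dV_{\pi^{-1}(y)}$. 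This is the step that converts the abstract fiberwise hypothesis into something the coarea formula can digest.

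Next I would bring in the smooth coarea formula (Federer--Howard) for the submersion $\pi$. Applying it to $g(x)=\chi_A(x)/\vol(\pi^{-1}(\pi(x)))$ turns the iterated integral above into $\int_E g(x)\,\NJac\pi(x)\,dV_E(x)$, up to the normalizing factor $1/V_B$. Here the hypothesis that $\NJac\pi\equiv c$ is constant is exactly what lets me pull it outside the integral, leaving $\frac{c}{V_B}\int_E \chi_A(x)/\vol(\pi^{-1}(\pi(x)))\,dV_E(x)$. Comparing this with $\unif_E(A)=\frac{1}{V_E}\int_E\chi_A\,dV_E$, the desired identity holds provided the fiber volume $\vol(\pi^{-1}(y))$ is independent of $y$; the matching of the remaining constants is then automatic, since coarea applied to $g\equiv 1$ yields $c\,V_E=\int_B\vol(\pi^{-1}(y))\,dV_B(y)$.

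The step I expect to be the crux is precisely this constancy of the fiber volume: it is what makes the fiberwise normalization uniform and allows the coarea identity to collapse to $\unif_E$. In the fiber-bundle setting at hand the fibers are isometric copies of $F$, so $\vol(\pi^{-1}(y))\equiv\vol(F)=V_F$ and the computation goes through with $c\,V_E=V_F V_B$; I would state this explicitly, since constancy of $\NJac\pi$ controls the horizontal scaling but by itself says nothing about the individual fiber volumes (a warped product $B\times F$ has $\NJac\pi\equiv 1$ yet varying fiber volumes), so this is the hypothesis that genuinely needs to be pinned down. Finally, for the \emph{Moreover} assertion I would specialize the relation $c\,V_E=V_F V_B$ to the unit-volume normalization $V_E=V_B=V_F=1$, which immediately forces the constant $\NJac\pi=c=1$.
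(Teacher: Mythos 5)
Your proposal follows essentially the same route as the paper's proof: Fubini on the product, the fiberwise maps $\Psi_y$ to turn the slices of $\xi^{-1}(\mathcal A)$ into fiber integrals of $\chi_{\mathcal A}$, and the smooth coarea formula with the constant normal Jacobian pulled out of the integral. The paper simply normalizes all three volumes to $1$ at the outset, so its bookkeeping (first $C=1$, then $\vol(\mathcal A)=\vol(\xi^{-1}(\mathcal A))$) is the special case of your relation $c\,V_E=V_F\,V_B$ with the two steps done in the opposite order.

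The one correction concerns the step you yourself flag as the crux. Constancy of the fiber volume does \emph{not} come from the fibers being ``isometric copies of $F$'': a smooth fiber bundle only makes the fibers diffeomorphic to $F$, so that justification is unavailable. What gives it is the measure-preservation hypothesis itself, provided it is read with respect to the Riemannian volume measures: since $\Psi_y\from F\to\pi^{-1}(y)$ is a bijection, applying the definition of measure preservation to the set $F$ itself yields $\vol(\pi^{-1}(y))=\vol(\Psi_y(F))=\vol(F)$ for every $y$, which is exactly what your computation needs. This is also what the paper's proof uses, silently, when it writes $\vol(\pi^{-1}(y))=\vol(F)$ and $\vol(\mathcal A\cap\pi^{-1}(y))=\vol(\Psi_y^{-1}(\mathcal A\cap\pi^{-1}(y)))$. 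Your warped-product remark is well taken and sharper than the paper's treatment: under the purely normalized reading of ``$\Psi_y$ measure preserving'' (the reading suggested by the statement of \cref{thm:mainresult_bundles}), the identity map on a warped product over $B$ with nonconstant warping satisfies all the hypotheses with $\NJac\pi\equiv 1$, yet $\xi$ fails to be measure preserving; so the volume-measure reading is the one under which the lemma holds, and with it your argument closes without any extra hypothesis.
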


\begin{proof}
	Without loss of generality, assume that the measures in $E$, $B$, and $F$ are normalized. We first check that $\NJac\pi(x)=1$ for every $x\in E$. Since $\fiberbundle{\pi}{F}{E}{B}$ is a smooth fiber bundle, we know that $\pi$ is a submersion and hence we can apply the smooth coarea formula. Therefore,
	\begin{align*}
		1&=\vol(E)=\int_{x\in E}dx=\int_{y\in B}\int_{z\in\pi^{-1}(y)}\frac{1}{C}\,dzdy=\frac{\vol(\pi^{-1}(y))\vol(B)}{C}\\
            &=\frac{\vol(F)\vol(B)}{C}=\frac{1}{C},
	\end{align*}
	and so $C=1$. Now we prove that $\xi$ is measure preserving. Let $\mathcal A\containedeq E$ be a measurable set. We have to prove that
	\begin{equation*}
		\vol(\mathcal A)=\vol(\xi^{-1}(\mathcal A)).
	\end{equation*}
	Using again the smooth coarea formula together with the fact that $\NJac\pi(x)=1$ for all $x\in E$, we have
	\begin{equation}\label{eq:vol_A}
        \begin{split}
        \vol(\mathcal A)&=\int_{x\in E}\chi_{\mathcal A}(x)\,dx=\int_{y\in B}\int_{z\in\pi^{-1}(y)}\chi_{\mathcal A}(z)\frac{1}{\NJac\pi(z)}\,dzdy\\
		&=\int_{y\in B}\int_{z\in\pi^{-1}(y)}\chi_{\mathcal A}(z)\,dzdy=\int_{y\in B}\vol(\mathcal A\intersection\pi^{-1}(y))\,dy\\
		&=\int_{y\in B}\vol(\Psi_y^{-1}(\mathcal A\intersection\pi^{-1}(y)))\,dy\\
		&=\int_{y\in B}\int_{z\in F}\chi_{\Psi_y^{-1}(\mathcal A\intersection\pi^{-1}(y))}(z)\,dzdy.
        \end{split}
	\end{equation}
	Note that
	\begin{align*}
		\chi_{\Psi_y^{-1}(\mathcal A\intersection\pi^{-1}(y))}(z)=1&\iff z\in \Psi_y^{-1}(\mathcal A\intersection\pi^{-1}(y))\iff \Psi_y(z)\in \mathcal A\intersection\pi^{-1}(y)\\
		&\iff \Psi_y(z)\in \mathcal A \iff \xi(y,z)\in \mathcal A\\
		& \iff (y,z)\in\xi^{-1}(\mathcal A)\iff \chi_{\xi^{-1}(\mathcal A)}(y,z)=1.
	\end{align*}
	Therefore, since
	\begin{align*}
		\vol(\xi^{-1}(\mathcal A))&=\int_{(y,z)\in B\times F}\chi_{\xi^{-1}(\mathcal A)}(y,z)\,d(y,z)=\int_{y\in B}\int_{z\in F}\chi_{\xi^{-1}(\mathcal A)}\,dzdy\\
		&=\int_{y\in B}\int_{z\in F}\chi_{\Psi_y^{-1}(A\intersection\pi^{-1}(y))}(z)\,dzdy\stackrel{\eqref{eq:vol_A}}{=}\vol(\mathcal A),
	\end{align*}
	the lemma follows.
\end{proof}

\begin{proof}[Proof of \cref{thm:mainresult_bundles}]
	From \cref{lemma:xi} we have that the mapping $\xi\from B\times F\to E$ given by $\xi(y,z)=\Psi_y(z)$ is measure preserving. Since $\Phi_E=\xi\composed\Phi_{B\times F}$, and both mappings are measure preserving, the theorem follows.
\end{proof}

\begin{example}[The Hopf fibration]
    Consider $\S^1\contained \C$ and $\S^{2n+1}\contained\C^{n+1}$. Recall that the (complex) Hopf fibration $\fiberbundle{h}{\S^1}{\S^{2n+1}}{\CP^n}$ is given by
    \begin{equation*}
        \longmap{h}{\S^{2n+1}}{\CP^n}{(y_1,\dotsc,y_{n+1})}{[y_1:\dotsb:y_{n+1}].}
    \end{equation*}
    The fiber of each $[y]=[y_1:\dotsb:y_{n+1}]\in\CP^n$ is a unit circle in $\S^{2n+1}$ given by
    \begin{equation*}
        h^{-1}([y])=\set{w\in\S^{2n+1}\st [w]=[y]}.
    \end{equation*}
    For each $[y]\in\CP^n$, we choose a unit norm representative $y$ smoothly out of a lower-dimensional set, and, thinking of the elements of $\S^1$ as unimodular complex numbers, we consider the mapping
    \begin{equation*}
        \longmap{\Psi_{y}}{(\S^{1},\unif)}{(h^{-1}([y]),\unif)}{\zeta}{\zeta y,}
    \end{equation*}
    which is an isometry and hence it is measure preserving. Therefore, by \cref{thm:mainresult_bundles}, the mapping
    \begin{equation*}
        \longmap{\Phi_{\S^{2n+1}}^{h}}{((0,1)^{2n+1},\unif)}{(\S^{2n+1},\unif)}{(y,t)}{\Psi_{\Phi_{\CP^n}(y)}(\Phi_{\S^1}(t))=\Phi_{\S^1}(t)\Phi_{\CP^n}(y),}
    \end{equation*}
    where $y\in\C^{n}\isomorphic \R^{2n}$ and $t\in\R$ (and recall that we are assuming that the representative of $\Phi_{\CP^n}(y)$ has unit norm) is measure preserving. Note that we have explicit expressions for both $\Phi_{\S^1}$ and $\Phi_{\CP^n}$, and hence for $\Phi_{\S^{2n+1}}^{h}$:
    \begin{align*}
        &\Phi_{\S^{2n+1}}^{h}(y,t)=\Phi_{\S^1}(t)\Phi_{\CP^n}(y)\\
        &\quad=-ie^{-i2\pi t}\frac{\Bigg(\frac{\Phi_{\R^{2n}}(y)}{\norm{\Phi_{\R^{2n}}(y)}}\bigg(-1+\frac{1}{1-\big(\frac{1}{(n-1)!}\gamma\big(n,\frac{\norm{\Phi_{\R^{2n}}(y)}^2}{2}\big)\big)^{1/n}}\bigg)^{1/2},1\Bigg)}{\norm*{\Bigg(\frac{\Phi_{\R^{2n}}(y)}{\norm{\Phi_{\R^{2n}}(y)}}\bigg(-1+\frac{1}{1-\big(\frac{1}{(n-1)!}\gamma\big(n,\frac{\norm{\Phi_{\R^{2n}}(y)}^2}{2}\big)\big)^{1/n}}\bigg)^{1/2},1\Bigg)}}.
    \end{align*}
    For the particular case of $\S^3$ we have
    \begin{align*}
      \Phi_{\S^3}^{h}(y,t)&=\Phi_{\S^1}(t)\Phi_{\CP^1}(y)=-ie^{-i2\pi t}e^{-\norm{\Phi_{\R^2}(y)}^2/4}\bigg(\frac{\Phi_{\R^2}(y)}{\norm{\Phi_{\R^2}(y)}}\sqrt{e^{\norm{\Phi_{\R^2}(y)}^2/2}-1},1\bigg)\\
    &=\bigg(-ie^{-i2\pi t}\frac{\Phi_{\R^2}(y)}{\norm{\Phi_{\R^2}(y)}}\sqrt{1-e^{-\norm{\Phi_{\R^2}(y)}^2/2}},-ie^{-i2\pi t}e^{-\norm{\Phi_{\R^2}(y)}^2/4}\bigg),
    \end{align*}
    since $\norm{\Phi_{\CP^1}(y)}=e^{\norm{\Phi_{\R^2}(y)}^2/4}$. Note that we are considering $\Phi_{\R^2}(y)\in\C$ through the canonical isomorphism $\R^2\isomorphic\C$ given by $(a,b)\mapsto a+bi$.
\end{example}

\appendix

\section{The smooth coarea formula}\label{appendix:coarea}

Let $\M,\cN$ be Riemannian manifolds. Given a smooth mapping $\varphi\from \M\to \cN$, let $D\varphi(x)\from T_x\M\to T_{\varphi(x)}\cN$ denote the differential mapping, where $T_x\M$ is the tangent space to $\M$ at $x\in \M$ and $T_{\varphi(x)}\cN$ is the tangent space to $\cN$ at $\varphi(x)\in \cN$. 

\begin{definition}[Normal Jacobian]
    Let $\M$ and $\cN$ be Riemannian manifolds and let $\varphi\from \M\to \cN$ be a $C^1$ surjective map. Let $n=\dim(\cN)$ be the real dimension of $\cN$. For every point $x\in \M$ such that the differential mapping $D\varphi(x)$ is surjective, let $v_1^x,\dotsc,v_n^x$ be an orthogonal basis of $(\ker(D\varphi(x)))^{\perp}$. Then we define the normal Jacobian of $\varphi$ at $x$, written as $\NJac\varphi(x)$, as the volume in the tangent space $T_{\varphi(x)}\cN$ of the parallelepiped spanned by $D\varphi(x)(v_1^x),\dotsc,D\varphi(x)(v_n^x)$. In the case that $D\varphi(x)$ is not surjective, we define $\NJac\varphi(x)=0$.
\end{definition}

\begin{theorem}[Smooth coarea formula]\label{thm:coarea}
    Let $\M$ and $\cN$ be two Riemannian manifolds of dimension $m$ and $n$, respectively, where $m\geq n$. Let $\varphi\from \M\to \cN$ be a smooth surjective map such that the differential mapping $D\varphi(x)$ is surjective for almost all $x\in \M$. Let $\psi\from \M\to \R$ be an integrable mapping. Then, the following equalities hold:
    \begin{align*}
        \int_{x\in \M}\psi(x)\,dx&=\int_{y\in \cN}\int_{x\in\varphi^{-1}(y)}\psi(x)\frac{1}{\NJac\varphi(x)}\,dxdy,\\
        \int_{x\in \M}\psi(x)\NJac\varphi(x)\,dx&=\int_{y\in \cN}\int_{x\in\varphi^{-1}(y)}\psi(x)\,dxdy.
    \end{align*}
\end{theorem}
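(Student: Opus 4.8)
The plan is to reduce the statement to the classical coarea formula for (locally) Lipschitz maps between Euclidean domains, due to Federer, and then to track carefully how the Riemannian volume densities of $\M$, of $\cN$, and of the fibers interact with the intrinsically-defined normal Jacobian once everything is expressed in local coordinates. Before localizing, I would first observe that the two displayed identities are equivalent, so that it suffices to prove one of them: substituting $\psi\cdot\NJac\varphi$ for $\psi$ in the first identity and cancelling the factor $\NJac\varphi$ yields the second, the cancellation being legitimate because $\NJac\varphi(x)>0$ exactly where $D\varphi(x)$ is surjective, which by hypothesis occurs for almost every $x$. On the critical set where $D\varphi$ fails to be surjective one has $\NJac\varphi=0$, and by Sard's theorem the image of this set is $\cN$-null, so it contributes nothing to either side.

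Next I would localize the problem. Using a smooth partition of unity subordinate to a cover of $\M$ by coordinate charts, together with monotone and dominated convergence to pass from simple to general integrable $\psi$, it is enough to prove the formula when $\psi$ is supported in a single chart $U\subseteq\M$ chosen small enough that $\varphi(U)$ lies in a single chart $V\subseteq\cN$. On such charts $\varphi$ is represented by a smooth map $\tilde\varphi$ between open subsets of $\R^m$ and $\R^n$, the Riemannian volumes become $\sqrt{\det g_\M}\,dx$ and $\sqrt{\det g_\cN}\,dy$, and the inner integral over each fiber $\varphi^{-1}(y)$ becomes integration over $\tilde\varphi^{-1}(y)$ against the induced $(m-n)$-dimensional Riemannian measure.

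The linear-algebraic heart of the argument is the identity $\NJac\varphi(x)=\sqrt{\det\!\big(D\varphi(x)\,D\varphi(x)^{*}\big)}$, where $D\varphi(x)^{*}$ denotes the adjoint taken with respect to the Riemannian inner products: choosing the orthonormal basis $v_1^x,\dots,v_n^x$ of $(\ker D\varphi(x))^{\perp}$ from the definition, the volume of the parallelepiped spanned by the vectors $D\varphi(x)(v_i^x)$ equals the Gram determinant $\sqrt{\det(\langle D\varphi(v_i^x),D\varphi(v_j^x)\rangle)}$, which is precisely this quantity and precisely the coarea factor appearing in Federer's formula once lengths and volumes are measured with the correct metrics. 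I would then apply the Euclidean coarea formula to $\tilde\varphi$ and reconcile the density factors, checking that the Euclidean coarea factor of $\tilde\varphi$, adjusted by the densities $\sqrt{\det g_\M}$, $\sqrt{\det g_\cN}$, and the induced fiber density, collapses to the coordinate-free $\NJac\varphi$; summing over the partition of unity then recovers the global identity.

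The main obstacle I expect is exactly this bookkeeping of volume densities: verifying that the Euclidean coarea factor of the coordinate representative, once multiplied and divided by the appropriate $\sqrt{\det g}$ factors of the two ambient metrics and of the metric induced on the fibers, reduces to the intrinsic normal Jacobian. This is a purely metric linear-algebra computation, but it is where all the care must go, since a misplaced density factor would falsify the formula. Once that identification is in place, the passage through charts, the partition of unity, and the removal of the null critical set are routine, and the equivalent second identity follows immediately from the first by the substitution described above.
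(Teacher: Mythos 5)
There is no internal proof to compare your argument against: \cref{thm:coarea} is stated in \cref{appendix:coarea} purely as a quoted classical result, attributed to Federer \cite{Federer1969} and Howard \cite{Howard1993}, with the reader referred to \cite[Section 2]{Beltran2011} for details. Judged as a reconstruction from the literature, your outline is correct, and it is essentially the standard derivation underlying those references: localize by charts and a partition of unity, reduce to Federer's Euclidean coarea formula, identify the intrinsic normal Jacobian as $\NJac\varphi=\sqrt{\det(D\varphi\,D\varphi^{*})}$ with the adjoint taken in the Riemannian inner products, and dispose of the critical set using the hypothesis (it has measure zero in $\M$) together with Sard's theorem (its image has measure zero in $\cN$). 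The bookkeeping step you rightly single out as the crux does close up: writing $A=D\tilde\varphi(x)$ for the differential of the coordinate representative of $\varphi$, $G=g_{\M}(x)$, $H=g_{\cN}(\tilde\varphi(x))$, and $G|_{\ker A}$ for the Gram matrix of $G$ in a Euclidean-orthonormal basis of $\ker A$, the identity to verify is
\begin{equation*}
	\NJac\varphi(x)\,\sqrt{\det G}=\sqrt{\det\bigl(G|_{\ker A}\bigr)}\,\sqrt{\det H}\,\sqrt{\det\bigl(AA^{T}\bigr)},
\end{equation*}
an elementary (if fiddly) piece of linear algebra in which the three right-hand factors are precisely the fiber density, the target density, and Federer's Euclidean coarea factor; once it is checked, the localized statement follows. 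Two refinements would make a write-up airtight. First, establish the formula for nonnegative measurable $\psi$, with both sides allowed to equal $+\infty$, and only then pass to signed integrable $\psi$ via $\psi=\psi^{+}-\psi^{-}$: your reduction multiplies by $\NJac\varphi$, and integrability of $\psi$ does not imply integrability of $\psi\NJac\varphi$ when $\NJac\varphi$ is unbounded. Second, since Federer's theorem naturally yields the second display (Jacobian factor on the left), it is cleaner to prove that one and deduce the first by substituting $\psi/\NJac\varphi$, extended by zero across the critical set, rather than the reverse direction you describe. The trade-off between your route and the paper's is clear: carrying out your outline would make the appendix self-contained modulo the Euclidean coarea formula, whereas the authors' citation is the economical choice for a theorem used only as a tool.
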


Note that if $m=n$ and $\varphi$ is a diffeomorphism we recover the classical change of variables theorem.

\newpage
\section{Auxiliary computations}\label{appendix}

In this appendix, we show the explicit computations leading to the formulas in \cref{table:explicit-formulas}.

\subsection{Explicit expression of $\Phi_{\B^n}$}

Recall from \cref{prop:mainresult_ball} that we have
\begin{equation*}
    \varphi_{\B^n}(x)=\frac{x}{\norm{x}}\Bigg(\frac{\gamma\big(\frac{n}{2},\frac{\norm{x}^2}{2}\big)}{\Gamma(\frac{n}{2})} \Bigg)^{1/n}.
\end{equation*}
Hence,
\begin{equation*}
    \Phi_{\B^n}(x)=\varphi_{\B^n}(\Phi_{\R^n}(x))=\frac{\Phi_{\R^n}(x)}{\norm{\Phi_{\R^n}(x)}}\Bigg(\frac{\gamma\big(\frac{n}{2},\frac{\norm{\Phi_{\R^n}(x)}^2}{2}\big)}{\Gamma(\frac{n}{2})} \Bigg)^{1/n}.
\end{equation*}

\subsection{Explicit expression of $\Phi_{\S^1}$}

Although we could simply define $\Phi_{\S^1}(x)=e^{i2\pi x}$, let us find the expression of this mapping using the general procedure. In this case, we have 
\begin{equation*}
    \Phi_{\R}(x)=\sqrt{2}\erf^{-1}(2x-1).
\end{equation*}
Following \cref{cor:sphere}, to find $\varphi_{\S^1}$ we have to obtain $\rho$ from
\begin{equation*}
\int_0^\rho\sin^{n-1}r\,dr=\frac{\sqrt{\pi}}{\Gamma(\frac{n+1}{2})}\gamma\bigg(\frac{n}{2},\frac{r^2}{2}\bigg).
\end{equation*}
Since in this case $n=1$, we have
\begin{equation*}
    \rho(r)=\sqrt{\pi}\gamma\bigg(\frac{1}{2},\frac{r^2}{2}\bigg)=\pi\erf\bigg(\frac{r}{\sqrt{2}}\bigg).
\end{equation*}
Hence,
\begin{equation*}
    \varphi_{\S^1}(x)=\frac{\pi x}{\abs{x}}\erf\bigg(\frac{\abs{x}}{\sqrt{2}}\bigg).
\end{equation*}
Therefore,
\begin{align*}
    \varphi_{\S^1}(\Phi_{\R}(x))&=\frac{\pi \sqrt{2}\erf^{-1}(2x-1)}{\abs{\sqrt{2}\erf^{-1}(2x-1)}}\erf\bigg(\frac{\abs{\sqrt{2}\erf^{-1}(2x-1)}}{\sqrt{2}}\bigg)\\
    &=\frac{\pi \erf^{-1}(2x-1)}{\abs{\erf^{-1}(2x-1)}}\erf(\abs{\erf^{-1}(2x-1)}).
\end{align*}
Since both $\erf$ and $\erf^{-1}$ are odd functions, the absolute values cancel each other and so
\begin{equation*}
    \varphi_{\S^1}(\Phi_{\R}(x))=\pi\erf(\erf^{-1}(2x-1))=\pi(2x-1).
\end{equation*}
Recall from \cref{table:dov} that the exponential map $\exp_{\S^1}\from (-\pi,\pi)\to \S^1$ is given by
\begin{equation*}
    \exp_{\S^1}(v)=\bigg(\frac{v}{\abs{v}}\sin\abs{v},\cos\abs{v}\bigg).
\end{equation*}
Hence,
\begin{align*}
    \Phi_{\S^1}(x)&=\exp_{\S^1}(\pi(2x-1))=\bigg(\frac{\pi(2x-1)}{\abs{\pi(2x-1)}}\sin\abs{\pi(2x-1)},\cos\abs{\pi(2x-1)}\bigg)\\
    &=(\sin(2\pi x-\pi),\cos(2\pi x-\pi))=(-\sin 2\pi x,-\cos 2\pi x)\\
    &\isomorphic -ie^{-i2\pi x}.
\end{align*}

\subsection{Explicit expression of $\Phi_{\S^2}$}

Recall from \cref{cor:sphere} that we have
\begin{equation*}
    \varphi_{\S^2}(x)=\frac{x}{\norm{x}}\cdot 2\arccos e^{-\norm{x}^2/4}.
\end{equation*}
Let us compute first $\exp_{\S^2}\composed \varphi_{\S^2}$. Recall from \cref{table:dov} that
\begin{equation*}
    \exp_{\S^2}(v)=\bigg(\frac{v}{\norm{v}}\sin\norm{v},\cos\norm{v}\bigg).
\end{equation*}
Hence,
\begin{align*}
    \exp_{\S^2}(\varphi_{\S^2}(x))&=\exp_{\S^2}\bigg(\frac{x}{\norm{x}}\cdot 2\arccos e^{-\norm{x}^2/4}\bigg)\\
    &=\bigg(\frac{\frac{x}{\norm{x}}\cdot 2\arccos e^{-\norm{x}^2/4}}{\norm{\frac{x}{\norm{x}}\cdot 2\arccos e^{-\norm{x}^2/4}}}\sin\norm*{\frac{x}{\norm{x}}\cdot 2\arccos e^{-\norm{x}^2/4}},\\
    &\quad\cos\norm*{\frac{x}{\norm{x}}\cdot 2\arccos e^{-\norm{x}^2/4}}\bigg).
\end{align*}
Due to the parity of the sine and the cosine, we can rewrite the previous expression as
\begin{align*}
    \exp_{\S^2}(\varphi_{\S^2}(x))&=\bigg(\frac{x}{\norm{x}}\sin(2\arccos e^{-\norm{x}^2/4}),\cos(2\arccos e^{-\norm{x}^2/4})\bigg).
\end{align*}
To further simplfy these expressions, note that, for $-1<x<1$,
\begin{align*}
    \sin(2\arccos(x))&=2\sin(\arccos(x))\cos(\arccos(x))=2x\sqrt{1-\cos^2(\arccos(x))}\\
    &=2x\sqrt{1-x^2},
\end{align*}
and
\begin{align*}
    \cos(2\arccos(x))&=\cos^2(\arccos(x))-\sin^2(\arccos(x))=x^2-(1-x^2)\\
    &=2x^2-1.
\end{align*}
Therefore,
\begin{align*}
    \exp_{\S^2}(\varphi_{\S^2}(x))&=\bigg(\frac{x}{\norm{x}}2e^{-\norm{x}^2/4}\sqrt{1-e^{-\norm{x}^2/2}},2e^{-\norm{x}^2/2}-1\bigg).
\end{align*}
Hence, we conclude that
\begin{equation*}
    \Phi_{\S^2}(x)=\bigg(\frac{\Phi_{\R^2}(x)}{\norm{\Phi_{\R^2}(x)}}2e^{-\norm{\Phi_{\R^2}(x)}^2/4}\sqrt{1-e^{-\norm{\Phi_{\R^2}(x)}^2/2}},2e^{-\norm{\Phi_{\R^2}(x)}^2/2}-1\bigg).
\end{equation*}

\subsection{Explicit expression of $\Phi_{\RP^1}$}

In this case, we have 
\begin{equation*}
    \Phi_{\R}(x)=\sqrt{2}\erf^{-1}(2x-1).
\end{equation*}
Following \cref{cor:RPn}, to find $\varphi_{\RP^1}$ we have to obtain $\rho$ from
\begin{equation*}
\int_0^\rho\sin^{n-1}r\,dr=\frac{\sqrt{\pi}}{2\Gamma(\frac{n+1}{2})}\gamma\bigg(\frac{n}{2},\frac{r^2}{2}\bigg).
\end{equation*}
Since in this case $n=1$, we have
\begin{equation*}
    \rho(r)=\frac{\sqrt{\pi}}{2}\gamma\bigg(\frac{1}{2},\frac{r^2}{2}\bigg)=\frac{\pi}{2}\erf\bigg(\frac{r}{\sqrt{2}}\bigg).
\end{equation*}
Hence,
\begin{equation*}
    \varphi_{\RP^1}(x)=\frac{\pi x}{2\abs{x}}\erf\bigg(\frac{\abs{x}}{\sqrt{2}}\bigg).
\end{equation*}
Therefore,
\begin{align*}
    \varphi_{\RP^1}(\Phi_{\R}(x))&=\frac{\pi \sqrt{2}\erf^{-1}(2x-1)}{2\abs{\sqrt{2}\erf^{-1}(2x-1)}}\erf\bigg(\frac{\abs{\sqrt{2}\erf^{-1}(2x-1)}}{\sqrt{2}}\bigg)\\
    &=\frac{\pi \erf^{-1}(2x-1)}{2\abs{\erf^{-1}(2x-1)}}\erf(\abs{\erf^{-1}(2x-1)}).
\end{align*}
Since both $\erf$ and $\erf^{-1}$ are odd functions, the absolute values cancel each other and so
\begin{equation*}
    \varphi_{\S^1}(\Phi_{\R}(x))=\frac{\pi}{2}\erf(\erf^{-1}(2x-1))=\frac{\pi}{2}(2x-1).
\end{equation*}
Recall from \cref{table:dov} that the exponential map $\exp_{\RP^1}\from (-\pi/2,\pi/2)\to \RP^1$ is given by
\begin{equation*}
    \exp_{\RP^1}(v)=\bigg(\frac{v}{\abs{v}}\tan\abs{v},1\bigg).
\end{equation*}
Hence,
\begin{align*}
    \Phi_{\RP^1}(x)&=\exp_{\RP^1}\Big(\frac{\pi}{2}(2x-1)\Big)=\bigg(\frac{\pi(2x-1)}{\abs{\pi(2x-1)}}\tan\abs*{\frac{\pi}{2}(2x-1)},1\bigg).
\end{align*}
Since the tangent function is odd, we can simplify the previous expression as follows:
\begin{align*}
    \Phi_{\RP^1}(x)&=\bigg(\tan\Big(\frac{\pi}{2}(2x-1)\Big),1\bigg)=\bigg(\tan\Big(\pi x-\frac{\pi}{2}\Big),1\bigg)=(-\cot\pi x,1).
\end{align*}

\subsection{Explicit expression of $\Phi_{\RP^2}$}

Recall from \cref{cor:RPn} that we have
\begin{equation*}
    \varphi_{\RP^2}(x)=\frac{x}{\norm{x}}\arccos e^{-\norm{x}^2/2}.
\end{equation*}
Let us compute $\exp_{\RP^2}\composed \varphi_{\RP^2}$. Recall from \cref{table:dov} that
\begin{equation*}
    \exp_{\RP^2}(v)=\bigg(\frac{v}{\norm{v}}\tan\norm{v},1\bigg).
\end{equation*}
Hence,
\begin{equation*}
    \exp_{\RP^2}(\varphi_{\RP^2}(x))=\bigg(\frac{\frac{x}{\norm{x}}\arccos e^{-\norm{x}^2/2}}{\norm{\frac{x}{\norm{x}}\arccos e^{-\norm{x}^2/2}}}\tan\norm*{\frac{x}{\norm{x}}\arccos e^{-\norm{x}^2/2}},1\bigg).
\end{equation*}
Since the tangent function is odd, we can simplify the previous expression as follows:
\begin{equation*}
    \exp_{\RP^2}(\varphi_{\RP^2}(x))=\bigg(\frac{x}{\norm{x}}\tan\arccos e^{-\norm{x}^2/2},1\bigg).
\end{equation*}
Note that
\begin{equation*}
    \tan\arccos(x)=\frac{\sin\arccos(x)}{\cos\arccos(x)}=\frac{\sqrt{1-x^2}}{x}=\sqrt{\frac{1}{x^2}-1}.
\end{equation*}
Hence,
\begin{equation*}
    \exp_{\RP^2}(\varphi_{\RP^2}(x))=\bigg(\frac{x}{\norm{x}}\sqrt{e^{\norm{x}^2}-1},1\bigg),
\end{equation*}
and we conclude that
\begin{equation*}
    \Phi_{\RP^2}(x)=\bigg(\frac{\Phi_{\R^2}(x)}{\norm{\Phi_{\R^2}(x)}}\sqrt{e^{\norm{\Phi_{\R^2}(x)}^2}-1},1\bigg).
\end{equation*}

\subsection{Explicit expression of $\Phi_{\CP^n}$}

Recall from \cref{cor:CPn} that we have
\begin{equation*}
    \varphi_{\CP^n}(x)=\frac{x}{\norm{x}}\arcsin\Bigg(\bigg(\frac{1}{(n-1)!}\gamma\bigg(n,\frac{\norm{x}^2}{2}\bigg)\bigg)^{1/(2n)}\Bigg).
\end{equation*}
Recall from \cref{table:dov} that
\begin{equation*}
    \exp_{\CP^n}(v)=\bigg(\frac{v}{\norm{v}}\tan\norm{v},1\bigg).
\end{equation*}
Let us compute $\exp_{\CP^n}\composed\varphi_{\CP^n}$. As for the case of $\RP^2$, the parity of the tangent function implies that
\begin{equation*}
    \exp_{\CP^n}(\varphi_{\CP^n}(x))=\Bigg(\frac{x}{\norm{x}}\tan\arcsin\Bigg(\bigg(\frac{1}{(n-1)!}\gamma\bigg(n,\frac{\norm{x}^2}{2}\bigg)\bigg)^{1/(2n)}\Bigg),1\Bigg).
\end{equation*}
Note that, in our range,
\begin{equation*}
    \tan\arcsin(x)=\frac{\sin\arcsin(x)}{\cos\arcsin(x)}=\frac{x}{\sqrt{1-x^2}}=\sqrt{-1+\frac{1}{1-x^2}}.
\end{equation*}
Hence,
\begin{equation*}
    \exp_{\CP^n}(\varphi_{\CP^n}(x))=\Bigg(\frac{x}{\norm{x}}\sqrt{-1+\frac{1}{1-\big(\frac{1}{(n-1)!}\gamma(n,\frac{\norm{x}^2}{2})\big)^{1/n}}},1\Bigg),
\end{equation*}
and we conclude that
\begin{equation*}
    \Phi_{\CP^n}(x)=\Bigg(\frac{\Phi_{\R^{2n}}(x)}{\norm{\Phi_{\R^{2n}}(x)}}\sqrt{-1+\frac{1}{1-\big(\frac{1}{(n-1)!}\gamma\big(n,\frac{\norm{\Phi_{\R^{2n}}(x)}^2}{2}\big)\big)^{1/n}}},1\Bigg).
\end{equation*}


\bibliographystyle{amsplain}

\begin{bibdiv}
	\begin{biblist}
		
		\bib{Alexa2022}{inproceedings}{
			author={Alexa, Marc},
			title={Super-{Fibonacci} spirals: Fast, low-discrepancy sampling of
				$\operatorname{SO}(3)$},
			date={2022},
			booktitle={2022 {IEEE}/{CVF} conference on computer vision and pattern
				recognition ({CVPR})},
			publisher={{IEEE}},
			pages={8281\ndash 8290},
		}
		
		\bib{Beltran2011}{article}{
			author={Beltrán, Carlos},
			title={Estimates on the condition number of random rank-deficient
				matrices},
			date={2011},
			ISSN={0272-4979},
			journal={{IMA} Journal of Numerical Analysis},
			volume={31},
			number={1},
			pages={25\ndash 39},
			url={https://academic.oup.com/imajna/article-lookup/doi/10.1093/imanum/drp035},
		}
		
		\bib{BeltranDelaTorreLizarte2022}{misc}{
			author={Beltrán, Carlos},
			author={{De la Torre}, Víctor},
			author={Lizarte, Fátima},
			title={Lower bound for the {Green} energy of point configurations in
				harmonic manifolds},
			publisher={arXiv},
			date={2022},
		}
		
		\bib{Besse1978}{book}{
			author={Besse, Arthur~L.},
			title={Manifolds all of whose geodesics are closed},
			publisher={Springer Berlin Heidelberg},
			date={1978},
			ISBN={3540081585},
		}
		
		\bib{BrauchartDick2012}{article}{
			author={Brauchart, Johann~S.},
			author={Dick, Josef},
			title={Quasi{\textendash}{Monte Carlo} rules for numerical integration
				over the unit sphere $\mathbb{S}^2$},
			date={2012},
			journal={Numerische Mathematik},
			volume={121},
			number={3},
			pages={473\ndash 502},
		}
		
		\bib{CuevasAlvarezVizosoBeltranSantamariaTucekPeters2022}{inproceedings}{
			author={Cuevas, Diego},
			author={Álvarez{-}Vizoso, Javier},
			author={Beltrán, Carlos},
			author={Santamaría, Ignacio},
			author={Tu\v{c}ek, Vít},
			author={Peters, Gunnar},
			title={A measure preserving mapping for structured {Grassmannian}
				constellations in {SIMO} channels},
			date={2022},
			booktitle={Proceedings of the 2022 {IEEE Global Communications Conference
					(GLOBECOM)}},
		}
		
		\bib{DeMarchiElefante2018}{article}{
			author={{De Marchi}, Stefano},
			author={Elefante, Giacomo},
			title={Quasi-{Monte Carlo} integration on manifolds with mapped
				low-discrepancy points and greedy minimal {Riesz} s-energy points},
			date={2018},
			journal={Applied Numerical Mathematics},
			volume={127},
			pages={110\ndash 124},
		}
		
		\bib{Federer1969}{book}{
			author={Federer, Herbert},
			title={Geometric measure theory},
			series={Die Grundlehren der mathematischen Wissenschaften, Band 153},
			publisher={Springer-Verlag New York, Inc., New York},
			date={1969},
		}
		
		\bib{Ferizovic2022}{misc}{
			author={Ferizovi\'{c}, Damir},
			title={Spherical cap discrepancy of perturbed lattices under the
				{Lambert} projection},
			publisher={arXiv},
			date={2022},
		}
		
		\bib{FerizovicHofstadlerMastrianni2022}{misc}{
			author={Ferizovi\'{c}, Damir},
			author={Hofstadler, Julian},
			author={Mastrianni, Michelle},
			title={The spherical cap discrepancy of {HEALPix} points},
			publisher={arXiv},
			date={2022},
		}
		
		\bib{GriepentrogHoepnerKaiserRehberg2008}{article}{
			author={Griepentrog, Jens~A.},
			author={Höpner, Wolfgang},
			author={Kaiser, Hans-Christoph},
			author={Rehberg, Joachim},
			title={A bi-{Lipschitz} continuous, volume preserving map from the unit
				ball onto a cube},
			date={2008},
			journal={Note di Matematica},
		}
		
		\bib{Holhos2017}{article}{
			author={Holho\c{s}, Adrian},
			title={Two area preserving maps from the square to the $p$-ball},
			date={2017},
			journal={Mathematical Modelling and Analysis},
			volume={22},
			number={2},
			pages={157\ndash 166},
		}
		
		\bib{HolhosRosca2014}{article}{
			author={Holho{\c{s}}, Adrian},
			author={Ro{\c{s}}ca, Daniela},
			title={An octahedral equal area partition of the sphere and near optimal
				configurations of points},
			date={2014},
			journal={Computers and Mathematics with Applications},
			volume={67},
			number={5},
			pages={1092\ndash 1107},
		}
		
		\bib{HolhosRosca2016}{article}{
			author={Holho{\c{s}}, Adrian},
			author={Ro{\c{s}}ca, Daniela},
			title={Area preserving maps and volume preserving maps between a class
				of polyhedrons and a sphere},
			date={2016},
			journal={Advances in Computational Mathematics},
			volume={43},
			number={4},
			pages={677\ndash 697},
		}
		
		\bib{HolhosRosca2019}{article}{
			author={Holho{\c{s}}, Adrian},
			author={Ro{\c{s}}ca, Daniela},
			title={Volume preserving maps between $p$-balls},
			date={2019},
			journal={Symmetry},
			volume={11},
			number={11},
			pages={1404},
		}
		
		\bib{Howard1993}{article}{
			author={Howard, Ralph},
			title={The kinematic formula in {Riemannian} homogeneous spaces},
			date={1993},
			journal={Memoirs of the American Mathematical Society},
			volume={106},
			number={509},
		}
		
		\bib{NgoDecurningeGuillaudYang2020}{article}{
			author={Ngo, Khac-Hoang},
			author={Decurninge, Alexis},
			author={Guillaud, Maxime},
			author={Yang, Sheng},
			title={Cube-split: A structured {Grassmannian} constellation for
				non-coherent {SIMO} communications},
			date={2020},
			journal={{IEEE} Transactions on Wireless Communications},
			volume={19},
			number={3},
			pages={1948\ndash 1964},
		}
		
		\bib{RoscaMorawiecDeGraef2014}{article}{
			author={Ro{\c{s}}ca, D.},
			author={Morawiec, A.},
			author={{De Graef}, M.},
			title={A new method of constructing a grid in the space of {3D}
				rotations and its applications to texture analysis},
			date={2014},
			journal={Modelling and Simulation in Materials Science and Engineering},
			volume={22},
			number={7},
			pages={075013},
		}
		
		\bib{RoscaPlonka2011}{article}{
			author={Ro{\c{s}}ca, Daniela},
			author={Plonka, Gerlind},
			title={Uniform spherical grids via equal area projection from the cube
				to the sphere},
			date={2011},
			journal={Journal of Computational and Applied Mathematics},
			volume={236},
			number={6},
			pages={1033\ndash 1041},
		}
		
		\bib{ShirleyChiu1997}{article}{
			author={Shirley, Peter},
			author={Chiu, Kenneth},
			title={A low distortion map between disk and square},
			date={1997},
			journal={Journal of Graphics Tools},
			volume={2},
			number={3},
			pages={45\ndash 52},
		}
		
	\end{biblist}
\end{bibdiv}

\end{document}